\documentclass[12pt]{article}
\usepackage{amsthm,amsfonts, amsbsy, amssymb,amsmath,graphicx}
\usepackage{graphics}
\usepackage{cite}

\oddsidemargin=-15mm%
\evensidemargin=-15mm%
\textwidth=190mm%

\topmargin=-20mm
\textheight=250mm

\def\thtext#1{
\catcode`@=11
\gdef\@thmcountersep{. #1}
\catcode`@=12}

\long\def\notes#1#2{
\begin{table}[b]
\vspace{#1pt}
\rule{3truecm}{0.3pt}
\vskip4pt
\par\noindent
#2
\end{table}}

\def\thank#1{\parbox{\hsize}{
\par \tolerance=200 \parindent=10pt \small #1.}
\vskip1pt \par\noindent}

\def\subclass#1{\parbox{\hsize}{
\tolerance=200 \parindent=10pt
{\small\it 2010 Mathematical Subject Classification}.
\small #1.}
\vskip1pt \par\noindent}

\def\keywords#1{\parbox{\hsize}{
\tolerance=200 \parindent=10pt
{\it Key words and phrases}. \small #1.} \vskip1pt}


\newtheorem{theorem}{Theorem}[section]

\newtheorem{statement}{Statement}[section]

\newcounter{Rk}[section]
\renewcommand{\thtext}{\thesection.\arabic{Rk}}
\newenvironment{remark}{\trivlist\item[\hskip\labelsep{\bf Remark}]
\refstepcounter{Rk}{\bf\thesection.\arabic{Rk}.}}%
{\endtrivlist}

\newcounter{Df}[section]
\renewcommand{\thtext}{\thesection.\arabic{Df}}
\newenvironment{definition}{\trivlist\item[\hskip\labelsep{\bf Definition}]
\par\refstepcounter{Df}{\bf\thesection.\arabic{Df}.}}%
{\endtrivlist}

\newcounter{Ex}[section]
\renewcommand{\thtext}{\thesection.\arabic{Ex}}
\newenvironment{example}{\trivlist\item[\hskip\labelsep{\bf Example}]
\par\refstepcounter{Ex}{\bf\thesection.\arabic{Ex}.}}%
{\endtrivlist}


\newcommand{\skcrro}{\raisebox{-0.25\height}{\includegraphics[width=0.5cm]{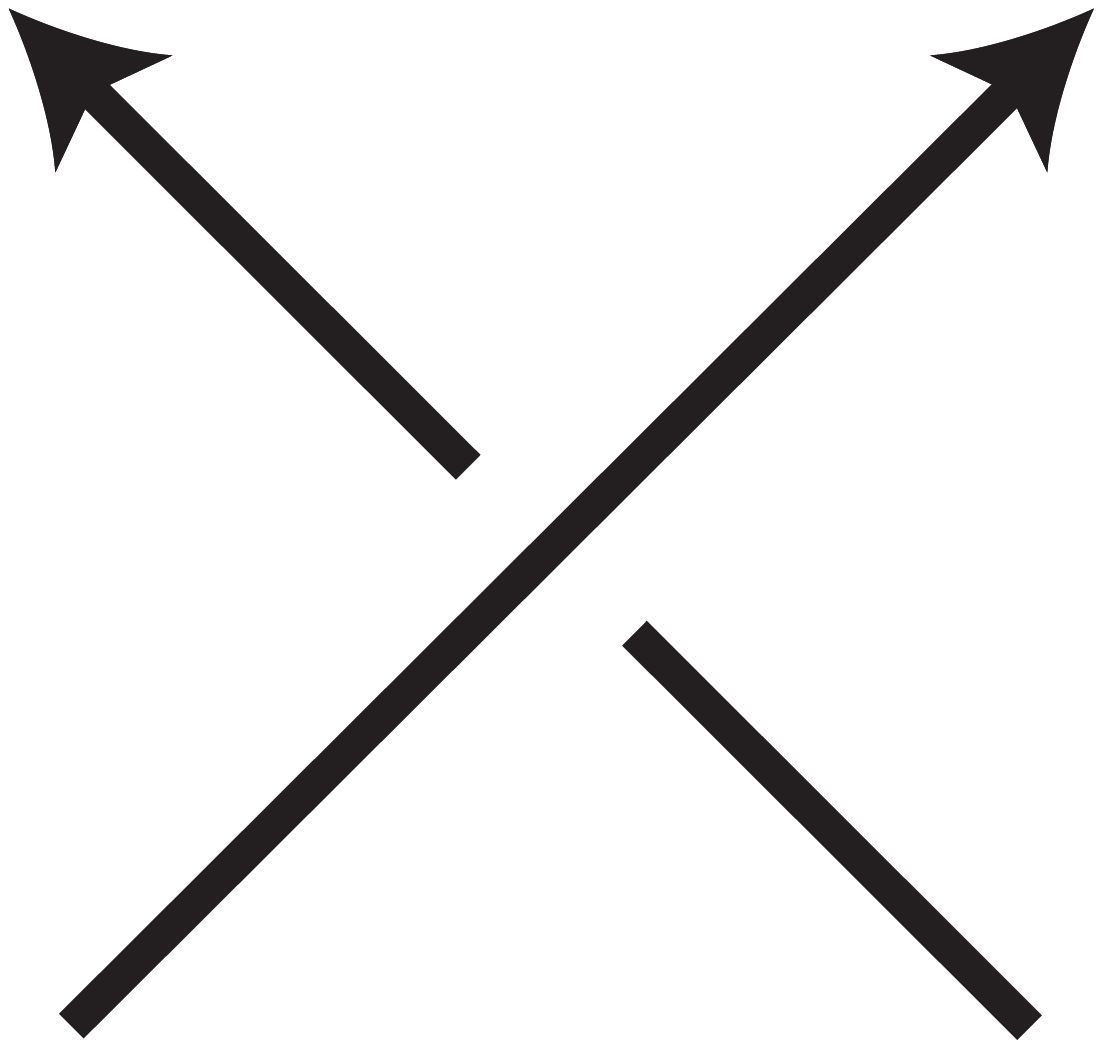}}}
\newcommand{\skcrlo}{\raisebox{-0.25\height}{\includegraphics[width=0.5cm]{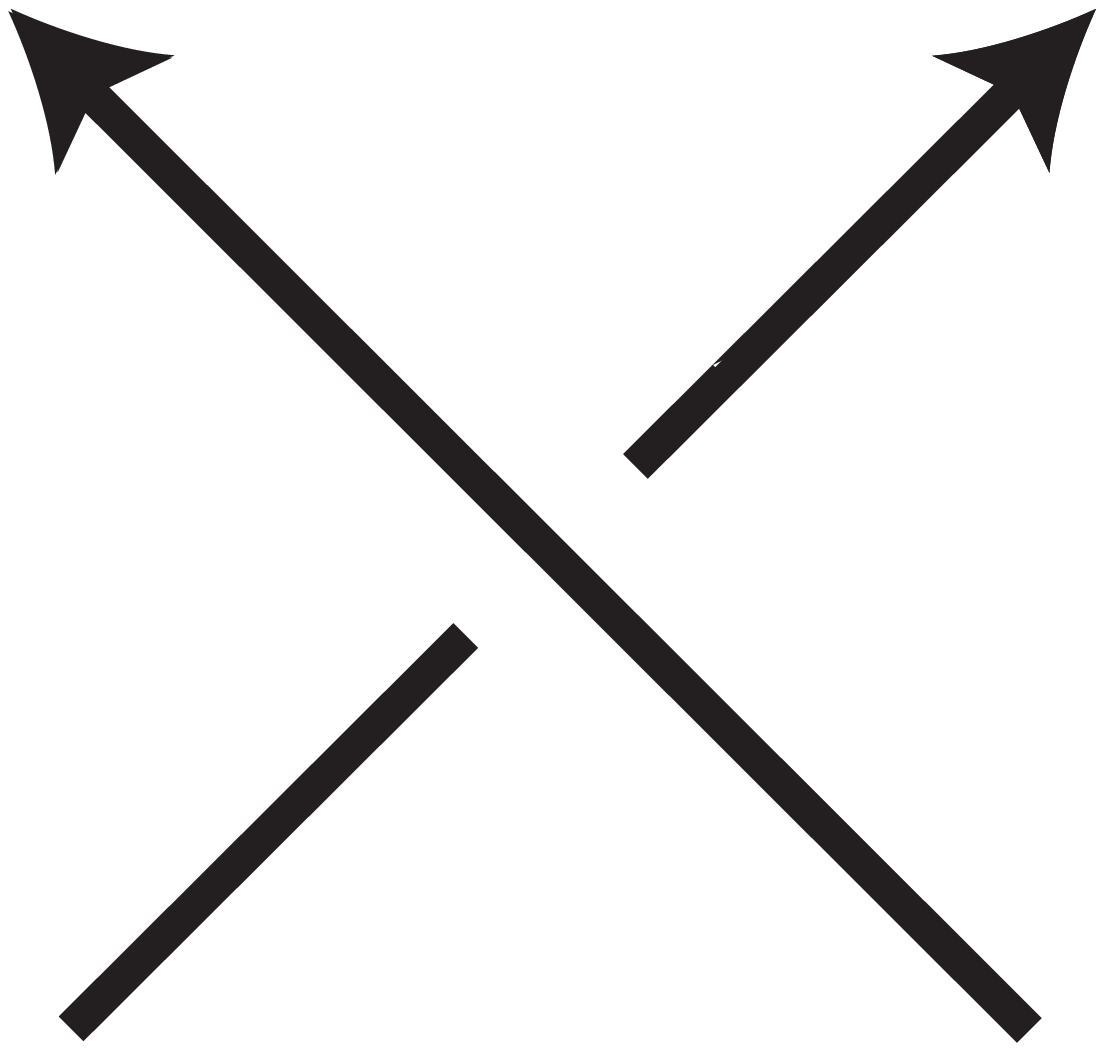}}}
\newcommand{\skcrossr}{\raisebox{-0.25\height}{\includegraphics[width=0.5cm]{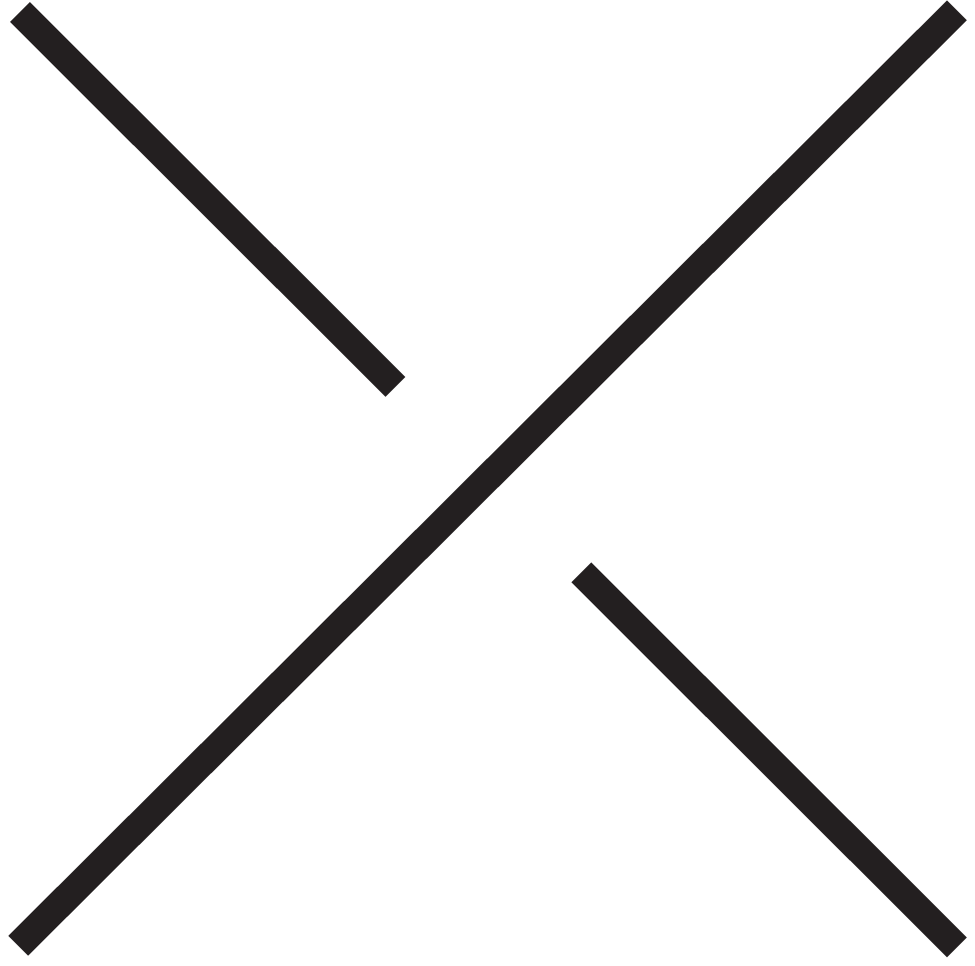}}}
\newcommand{\skcrv}{\raisebox{-0.25\height}{\includegraphics[width=0.5cm]{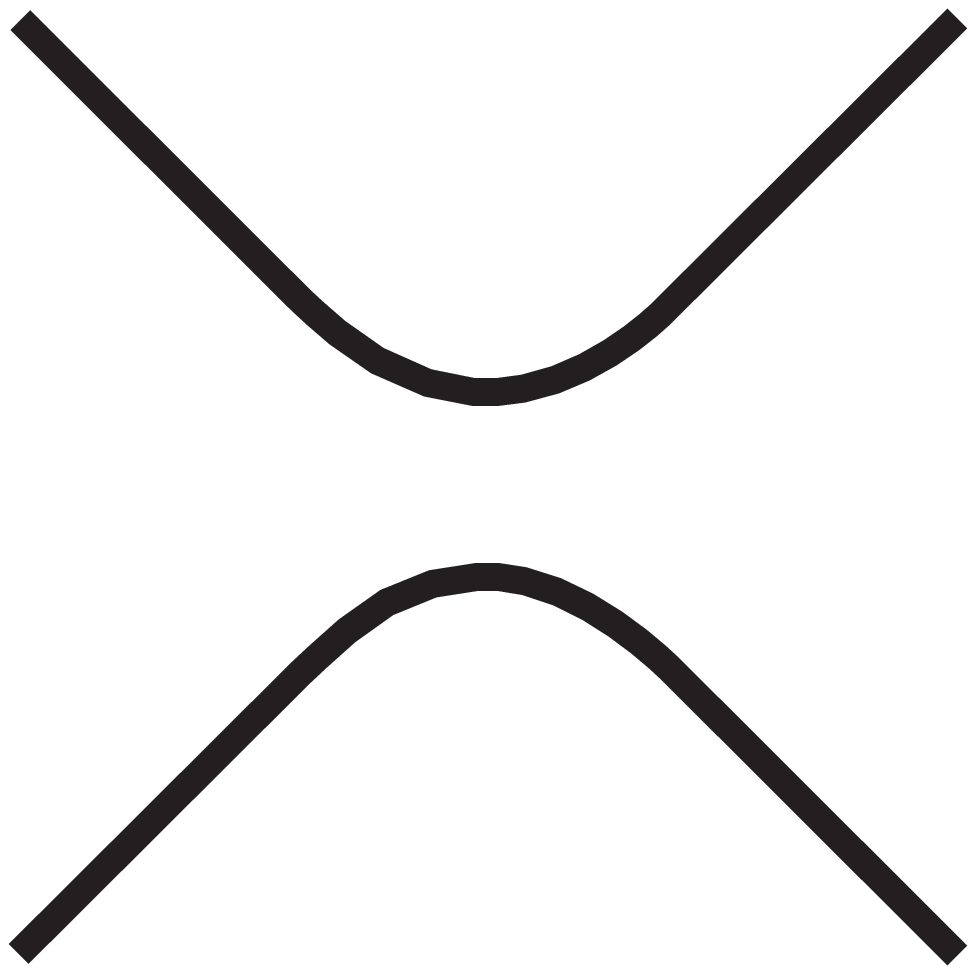}}}
\newcommand{\skcrh}{\raisebox{-0.25\height}{\includegraphics[width=0.5cm]{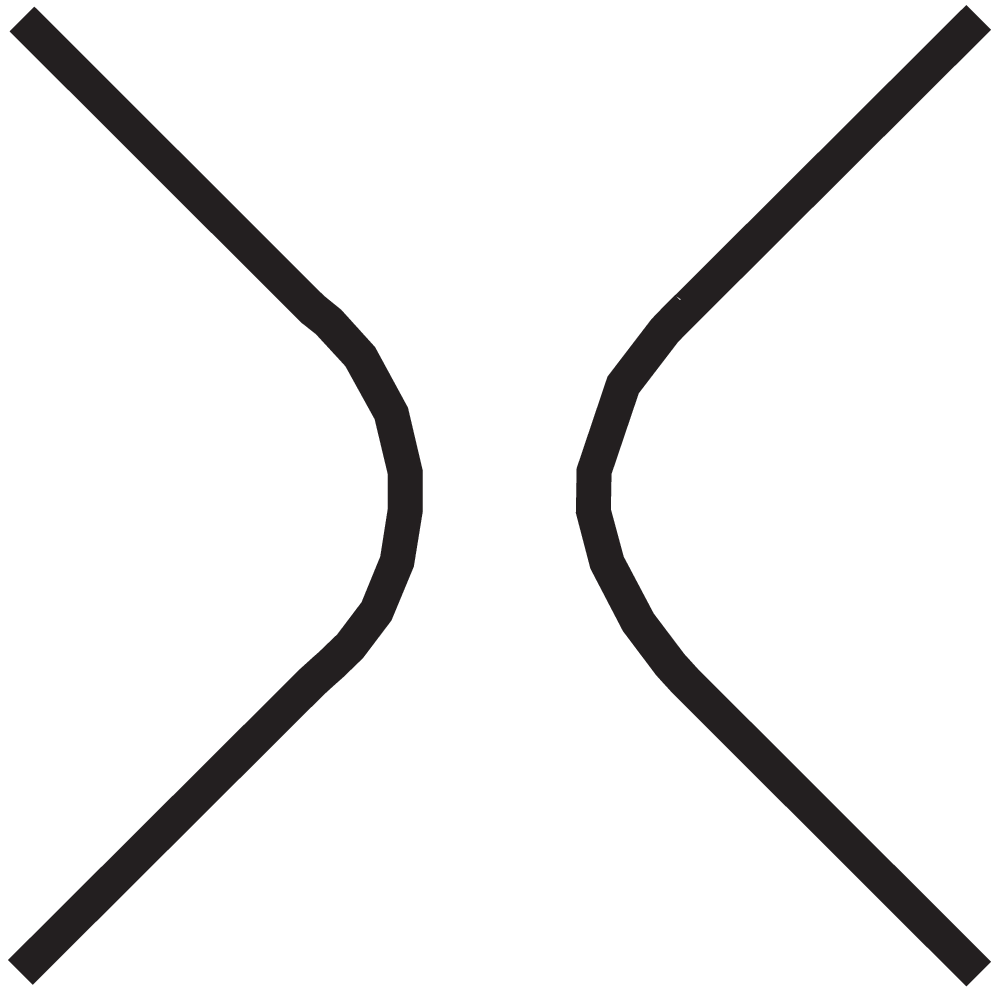}}}

\newcommand{\biqbrnor}{\raisebox{-0.\height}{\centering\includegraphics[width=6cm]{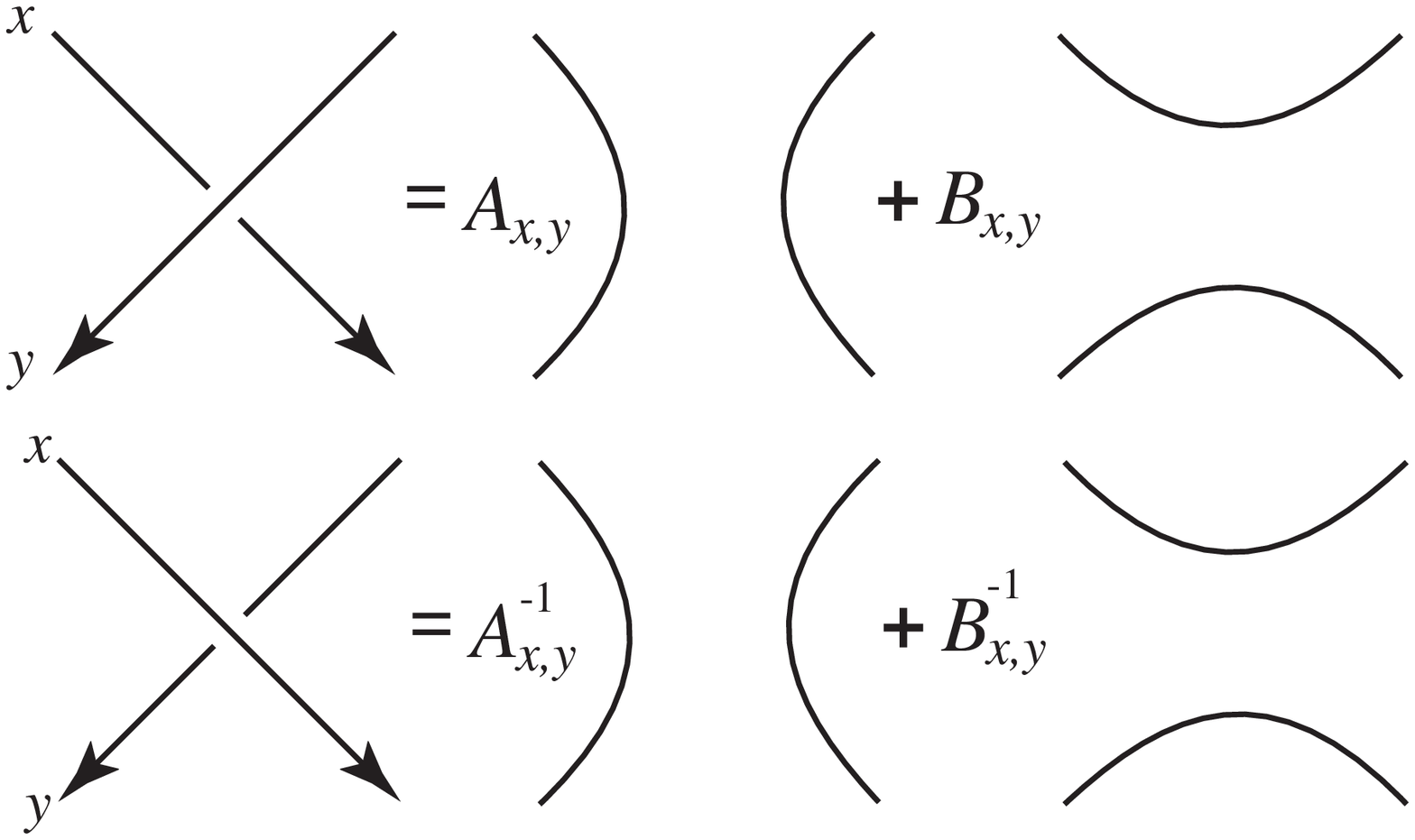}}}
\newcommand{\biqbrdef}{\raisebox{-0.\height}{\centering\includegraphics[width=8cm]{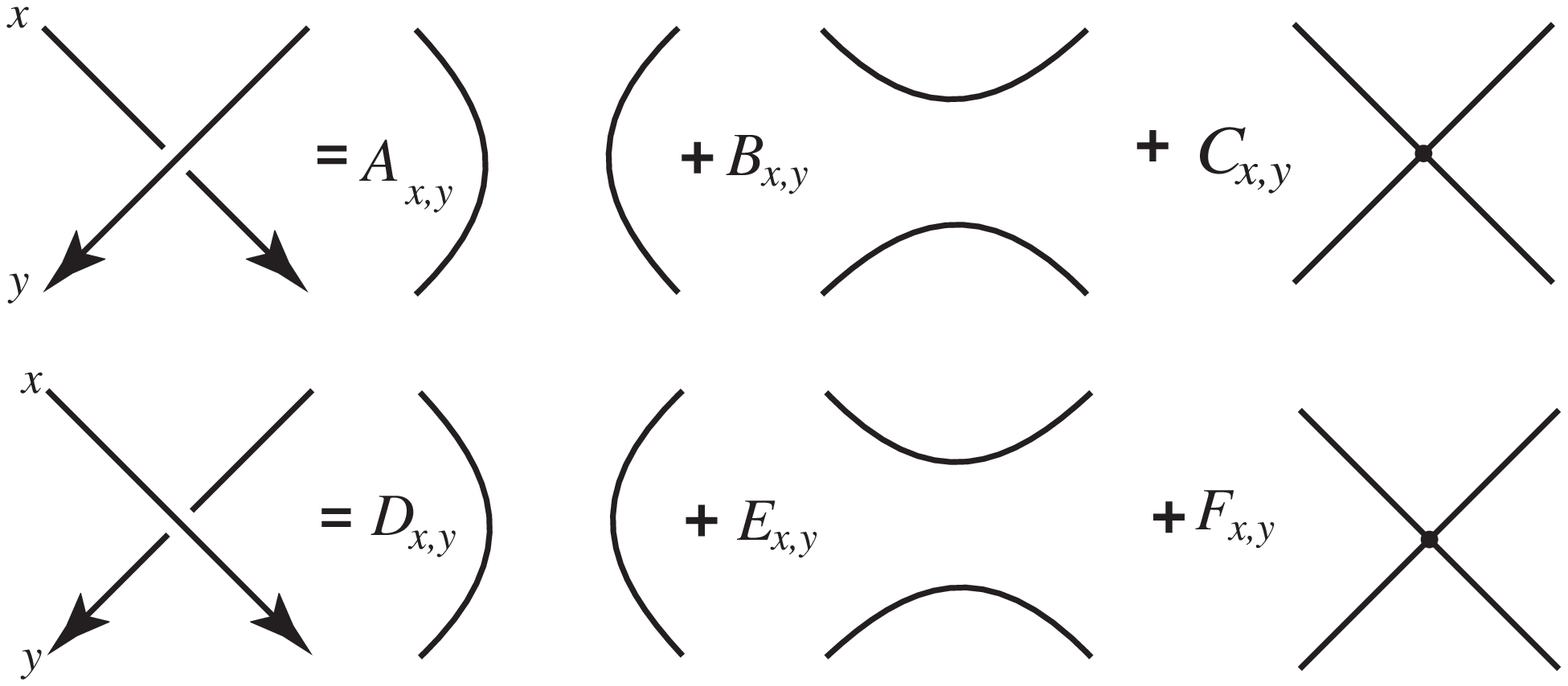}}}
\newcommand{\biqcol}{\raisebox{-0.\height}{\centering\includegraphics[width=6cm]{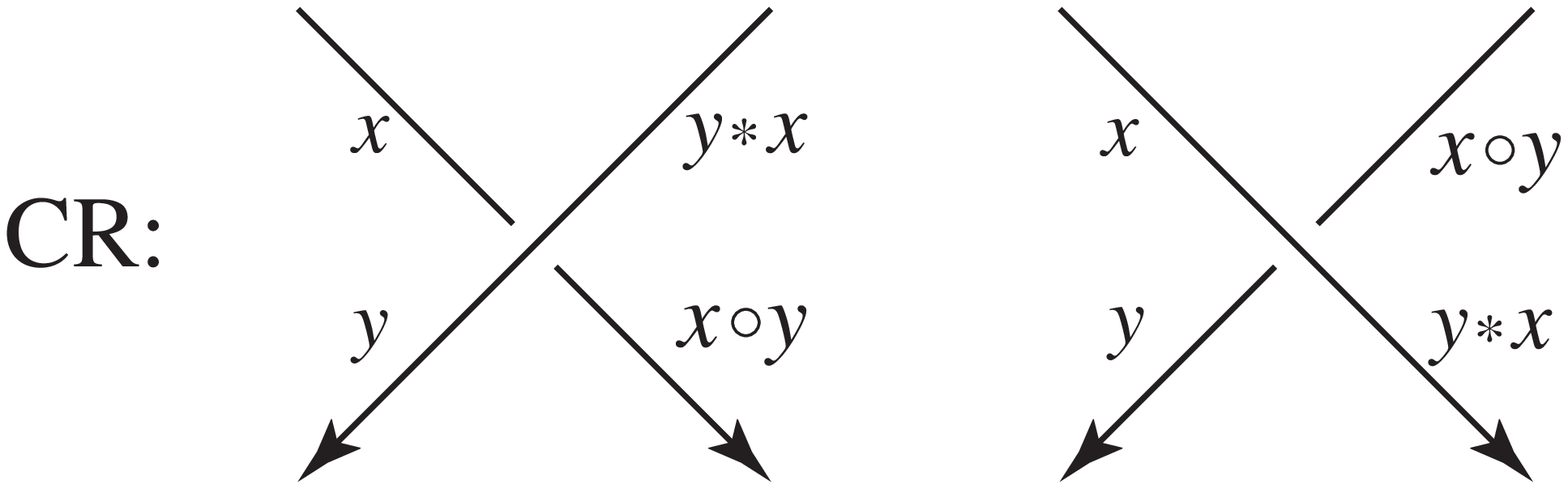}}}
\newcommand{\crev}{\raisebox{-0.\height}{\centering\includegraphics[width=6cm]{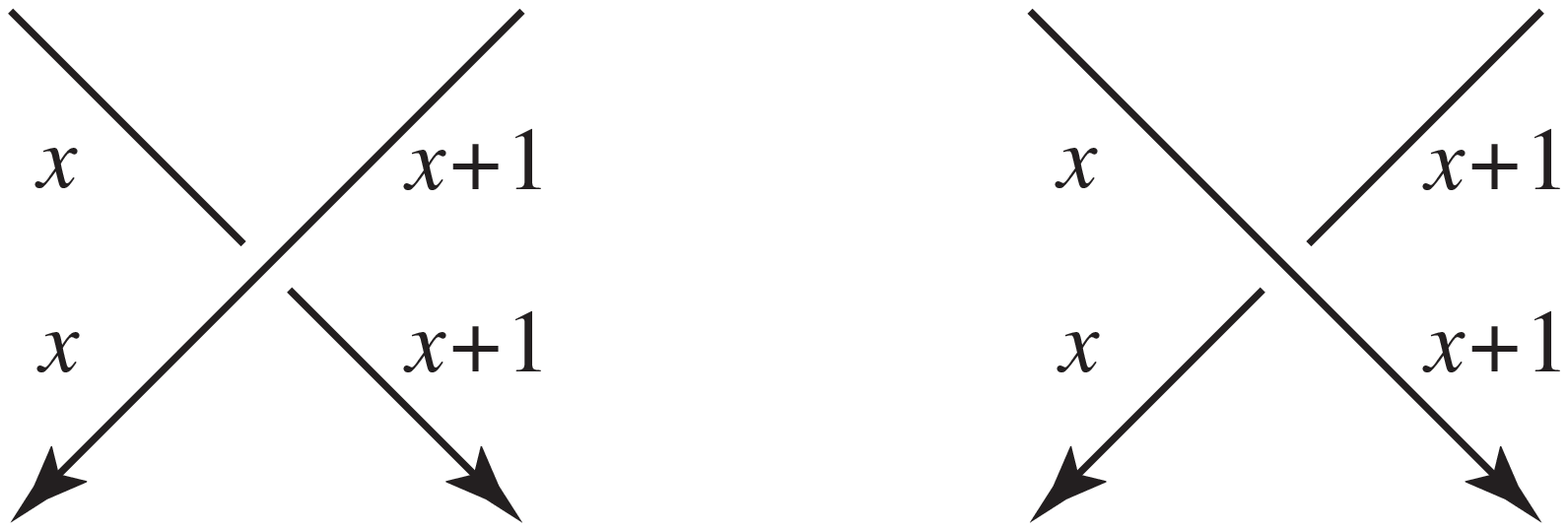}}}
\newcommand{\crod}{\raisebox{-0.\height}{\centering\includegraphics[width=6cm]{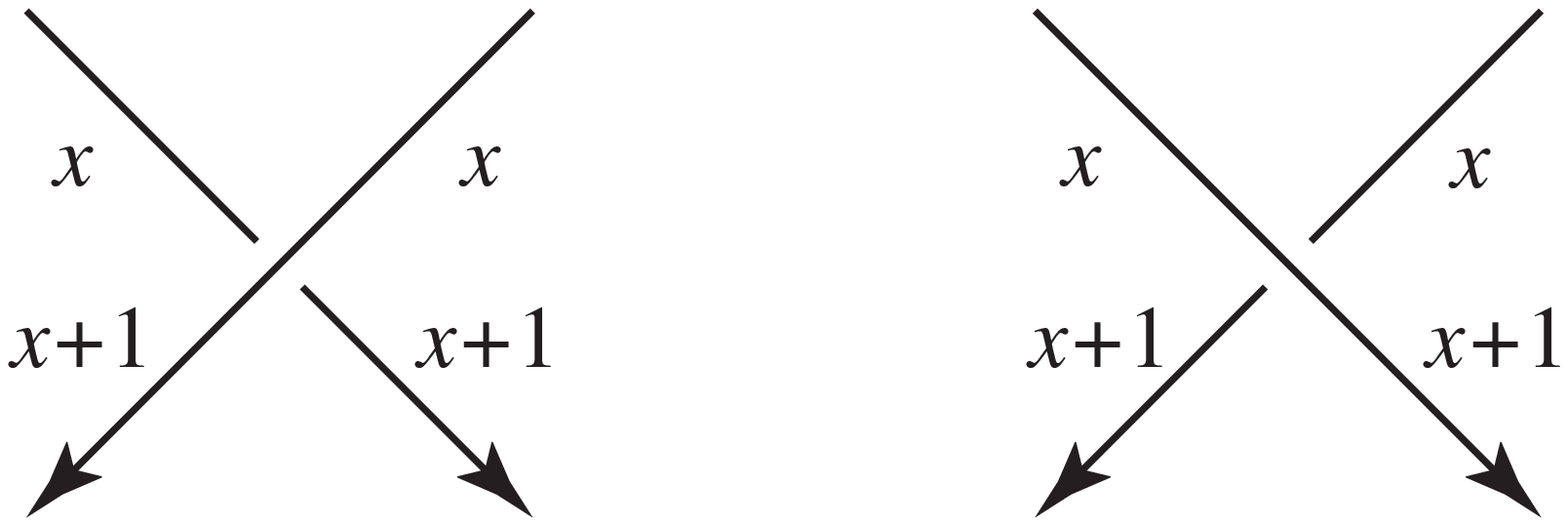}}}


\def\V{\mathcal{V}}
\newcommand{\ob}{\mathrm{ob}}


\title{Picture-valued biquandle bracket}

\author{Denis P.~Ilyutko, Vassily O.~Manturov}

\begin{document}
\date{}

\maketitle

\abstract{In~\cite{Mant31}, the second named author constructed the bracket invariant $[\cdot]$ of virtual knots valued in {\em pictures} (linear combinations of virtual knot diagrams with some crossing information omitted), such that for many diagrams $K$, the following formula holds: $[K]=\widetilde{K}$, where $\widetilde{K}$ is the underlying graph of the diagram, i.e., the value of the invariant on a diagram equals the diagram itself with some crossing information omitted. This phenomenon allows one to reduce many questions about virtual knots to questions about their diagrams.

In~\cite{NOR}, the authors discovered the following phenomenon: having a biquandle colouring of a certain knot, one can enhance various state-sum invariants (say, Kauffman bracket) by using various coefficients depending on colours.

Taking into account that the parity can be treated in terms of biquandles, we bring together the two ideas from these papers and construct the the picture-valued parity biquandle bracket for classical and virtual knots.

This is an invariant of virtual knots valued in pictures. Both the parity bracket and Nelson--Orrison--Rivera invariants are partial cases of this invariants, hence this invariant enjoys many properties of various kinds.}

\notes{0}{
\subclass{57M15, 57M25, 57M27} 
\keywords{Knot, Reidemeister moves, diagram, biquandle, parity, bracket}%
\thank{Research is carried out with the support of Russian Science Foundation (project no. 16-11-10291)}}


 \section{Introduction}


One of the simplest knot invariants is the colouring invariant: one colours edges of a knot diagram by colours from a given palette and counts some colourings which are called admissible. Colouring invariants are a partial case of quandle invariants, which, in turn, are partial cases of biquandle invariants.

The beautiful idea due to Nelson, Orrison, Rivera~\cite{NOR} allows one to use colours in order to enhance various invariants of knots. To this end, one takes a colouring of a knot diagram and a quantum invariant which satisfies a certain skein-relation (say, Kauffman bracket) and tries to take different coefficients for the states. Then one considers a sum over all admissible colourings. In~\cite{NOR}, it is proved that for good quandles this invariant is strictly stronger than the knot group (it detects the Square knot from the Granny knot), the HOMFLY-PT, Jones or Alexander polynomials (it distinguishes $10_{132}$ and $5_1$). Also this invariant detects the right- and left-hand trefoils and hence can distinguish mirror images. Note that this construction can be generalized to virtual links verbatim (we just disregard virtual crossings).

Virtual knots have a very important feature which turns out to be trivial for classical knots (but not links): the {\em parity}, see Sec.~\ref{sec:kn&biq&par}. The simplest (Gaussian) parity is defined in terms of Gauss diagrams: a chord is {\em even} if it is linked with evenly many chords, otherwise it is odd, see Sec.~\ref{sec:kn&biq&par}. Crossings are called {\em even} ({\em odd}) respectively to the chords.

Recall that {\em free knots} are obtained from virtual knots if we omit the over/under information at crossings and replace the cyclic ordering with the cross structure. The parity allows one to realize the following principle for free knots~\cite{IMN2,IMN,IMN3,Mant31,Mant32,Mant33,Mant35,Mant39,Mant36,Mant38,Mant40,Mant41,Mant42,ManIly}:\\ If a free knot diagram is complicated enough then it realizes itself. The latter means that it appears as a subdiagram in any diagram equivalent to it. This principle comes from a very easy formula $[K]=\widetilde{K}$, where $K$ on the LHS is a (virtual or free) knot (i.e., a diagram considered up to various moves), and $\widetilde{K}$ on the RHS is a single diagram (the underlying graph) of the knot (in the case of free knots $\widetilde{K}=K$), which is complicated enough and considered as an element of a linear space formally generated by such diagrams.

The bracket $[\cdot]$ is a {\em diagram-valued} invariant of (virtual, free) knots~\cite{IMN2,IMN,IMN3,Mant31,Mant32,Mant33,Mant35,Mant39,Mant36,Mant38,Mant40,Mant41,Mant42,ManIly}, see Sec.~\ref{subsec:parbr}. For us, it is important to know that
 \begin{enumerate}
  \item[1)]
it is defined by using {\em states} in a way similar to the Kauffman bracket,
  \item[2)]
it is valued not in numbers or (Laurent) polynomials but in {\em diagrams} meaning that we do not completely resolve a knot diagram leaving some crossings intact.
 \end{enumerate}
It is important to note that for some (completely odd) diagrams, no crossings are smoothed at all.

It is the first appearance of diagram-valued invariants in knot theory. For virtual knots, it allows one to make very strong conclusions about the shape of any diagram by looking just at one diagram.

Though there are non-trivial parities for classical knots, parity can be treated in terms of biquandle colourings, see Sec~\ref{sec:kn&biq&par}. Namely, there is a very simple biquandle which allows one to say whether a crossing is even or odd by looking at colours of edges incident to this crossing, see Sec.~\ref{sec:parbiqbr}. Note that a parity is an invariant of {\em crossings} unlike biquandle colouring: if we apply the third Reidemeister move then the corresponding crossings before and after have the same parity, though their colours can differ drastically.

In the papers~\cite{NOR,NR}, (bi)quandles were used to enhance various invariants in the following way: we take some combinatorial scheme for constructing a knot invariant, for example, a quantum invariant, but we use different input data when dealing with crossings: we take into account the colours of the (short) arcs of the knot diagram. Hence, (bi)quandle colourings, though usually not being ``invariants of crossings'', turn out to be a very powerful tool for enhancing knot invariants.

The aim of the present paper is to bring together the ideas from parity theory~\cite{IMN2,IMN,IMN3,Mant31,Mant32,Mant33,Mant35,Mant39,Mant36,Mant38,Mant40,Mant41,Mant42,ManIly} and the ideas from~\cite{NOR} and to construct the universal biquandle picture-valued invariant of classical and virtual links. The invariant is defined here by using state sums but, keeping in mind that there may be even and odd crossings (the latter {\em not} being resolved), we axiomatize the new invariant in such a way that it {\em a priori} dominates both the biquandle bracket and the parity bracket, thus being a very strong invariant of both virtual and classical links: it is at least as strong as the biquandle invariant for classical knots and at least as strong as the parity bracket for virtual knots.

Note that we do not know any partial examples of invariants for classical links, which are {\em diagram-valued}.

The paper is organised as follows. In the next section we give the main definitions concerning knots, biquandles and parity. In Sec.~\ref{sec:parbiqbr} we present a construction of a parity-biquandle bracket.

\section{Knots, biquandle and parity}\label{sec:kn&biq&par}

 \subsection{Knots}

Throughout the paper we consider a knot from combinatorial point of view.

 \begin{definition}
A {\em $4$-valent graph} is a finite graph with each vertex having degree four.

A $4$-valent graph is called a {\em graph with a cross structure} or a {\em framed $4$-graph} if for every vertex the four emanating half-edges are split into two pairs of half-edges (we have the structure of opposite edges).

A $4$-valent graph is called a {\em graph with a cycle order} if for every vertex the four emanating half-edges are cyclically ordered.

In both cases we also admit diagrams which consist of disjoint unions of the above-mentioned diagrams and several circles with no vertices.
 \end{definition}

 \begin{remark}
Every $4$-valent graph is called a graph with a cycle order is a graph with the cross structure.
 \end{remark}

 \begin{definition}
By a ({\em classical}) {\em link diagram} we mean a plane $4$-valent graph with a cycle order and over/undercrossing structure at each vertex. Half-edges constituting a pair are called {\em opposite}. A diagram is called {\em oriented} if each edge is oriented and opposite edges has the same orientation. The relation of half-edges to be opposite allows one to define the notion of unicursal component and to count the number of unicursal components of a link diagram, see Fig.~\ref{knlivir} for classical knots (links with one unicursal component) and links. Vertices of a diagram are called {\em crossings}.

A {\em virtual link diagram} is a generic immersion of a $4$-valent graph into the plane such that we have a cycle order and over/undercrossing structure at each vertex and mark each edge intersection by a circle, see Fig.~\ref{knlivir}. For virtual links the definitions of components and of an oriented virtual link are the same as for classical links.
 \end{definition}

 \begin{figure}
  \centering\includegraphics[width=280pt]{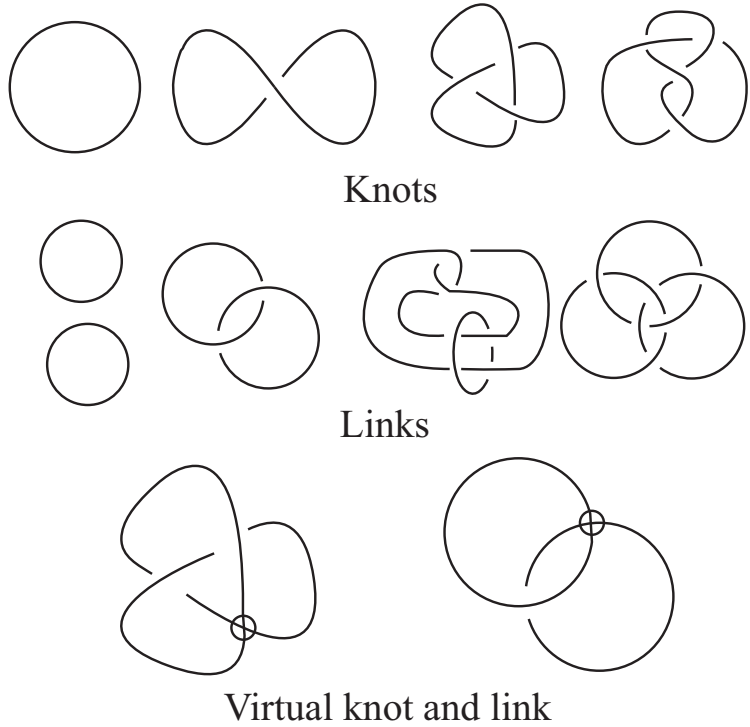}
  \caption{The simplest knots}\label{knlivir}
 \end{figure}

 \begin{definition}
A {\em classical} ({\em virtual}) {\em link} is an equivalence class of classical (virtual) diagrams modulo planar isotopies (diffeomorphisms of the plane on itself preserving the orientation of the plane) and Reidemeister moves (generalized Reidemeister moves). The generalized Reidemeister moves consist of usual Reidemeister moves referring to classical crossings, see Fig.~\ref{rms}, and the {\em detour move} that replaces one arc containing only virtual intersections and self-intersections by another arc of such sort in any other place of the plane, see Fig.~\ref{detour}.

An {\em oriented classical} ({\em virtual}) {\em link} is an equivalence class of oriented classical (virtual) diagrams modulo planar isotopies (diffeomorphisms of the plane on itself preserving the orientation of the plane) and oriented Reidemeister moves (oriented generalized Reidemeister moves), see Fig.~\ref{rdmst} for oriented Reidemeister moves. The oriented generalized Reidemeister moves consist of usual oriented Reidemeister moves and the detour move.
 \end{definition}

 \begin{remark}
It is worth saying that classical knot theory embeds in virtual knot theory~\cite{Mant35,Mant41} and this fact is not trivial.
 \end{remark}

 \begin{figure}
  \centering\includegraphics[width=300pt]{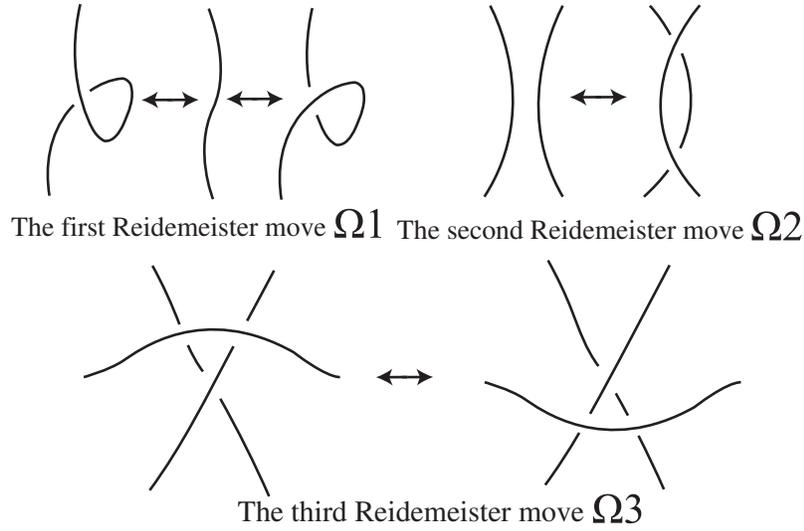}
  \caption{Reidemeister moves $\Omega{1},\,\Omega{2},\,\Omega{3}$}\label{rms}
 \end{figure}

 \begin{figure}
  \centering\includegraphics[width=250pt]{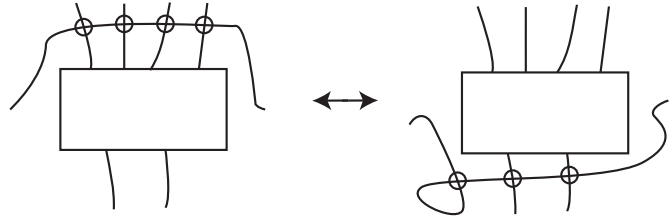}
  \caption{Detour move}\label{detour}
 \end{figure}

 \subsection{Biquandle}

 \begin{definition}\label{def:biq}
A {\em biquandle}~\cite{FJK,HrKa2,KaeKa2,KM1,NOR,NR} is a set $X$ with two binary operations $\circ,\ast\colon X\times X\to X$ satisfying the following axioms:
  \begin{enumerate}
   \item[(R1)]
$x\circ x=x\ast x$ for $\forall\, x\in X$,
   \item[(R2)]
for any $y\in X$ the maps $\alpha_y,\,\beta_y\colon X\to X$ defined by $\alpha_y(x)=x\ast y$, $\beta_y(x)=x\circ y$ are invertible, i.e.\ for any $z_1,\,z_2\in X$ there exist $x_1,\,x_2\in X$ such that $x_1\ast y=z_1$, $x_2\circ y=z_2$,
   \item[(R3)]
the map $S\colon X\times X\to X\times X$ defined by $S(x,y)=(y\ast x,x\circ y)$ is invertible, i.e.\ for any $(z,w)\in X\times X$ there exists $(x,y)\in X\times X$ such that $(y\ast x,x\circ y)=(z,w)$,
   \item[(R4)]
the {\em exchange laws} hols\/:
  \begin{gather*}
(x\circ z)\circ(y\circ z)=(x\circ y)\circ(z\ast y),\\
(y\circ z)\ast(x\circ z)=(y\ast x)\circ(z\ast x),\\
(z\ast x)\ast(y\ast x)=(z\ast y)\ast(x\circ y).
  \end{gather*}
  \end{enumerate}

If $X$ and $Y$ are biquandles then a {\em biquandle homomorphism} is a map $f\colon X\to Y$ such that $f(x*y)=f(x)*f(y)$ and $f(x\circ y)=f(x)\circ f(y)$ for $\forall\,x,\,y\in X$.
 \end{definition}

 \begin{definition}
Let $X$ ba a finite biquandle and $L$ be an oriented link diagram with $n$ crossings.

The {\em fundamental biquandle} $\mathcal{B}(L)$ of $L$ is the set with biquandle operations consisting of equivalence classes of words in a set of generators corresponding to the edges of $L$ modulo the equivalence relation generated by the crossing relations
 \begin{center}
\biqcol
  \end{center}
of $L$ and the biquandle axioms:
 $$
\mathcal{B}(L)=\langle x_1,\dots,x_{2n}\,|\,\mathrm{CR},\,\mathrm{R1},\,\mathrm{R2},\,\mathrm{R3},\,\mathrm{R4}\rangle.
 $$

A {\em biquandle coloring} (or an {\em $X$-coloring}\/) of $L$ is an assignment of elements of $X$ to the edges in $L$ such that the crossing relations are satisfied at every crossing, i.e.\ it is a biquandle homomorphism $f\colon\mathcal{B}(L)\to X$. The set of biquandle colorings of $L$ is identified with the set $\mathrm{Hom}(\mathcal{B}(L),X)$ of biquandle homomorphisms from the fundamental biquandle of $L$ to $X$.
 \end{definition}

 \begin{remark}
The biquandle axioms are the conditions required for every biquandle coloring of the edges in a knot diagram before a move and after the move, see Fig.~\ref{rdmst}. Therefore, the number of biquandle colorings is an invariant.
 \end{remark}

 \begin{figure}
  \centering\includegraphics[width=420pt]{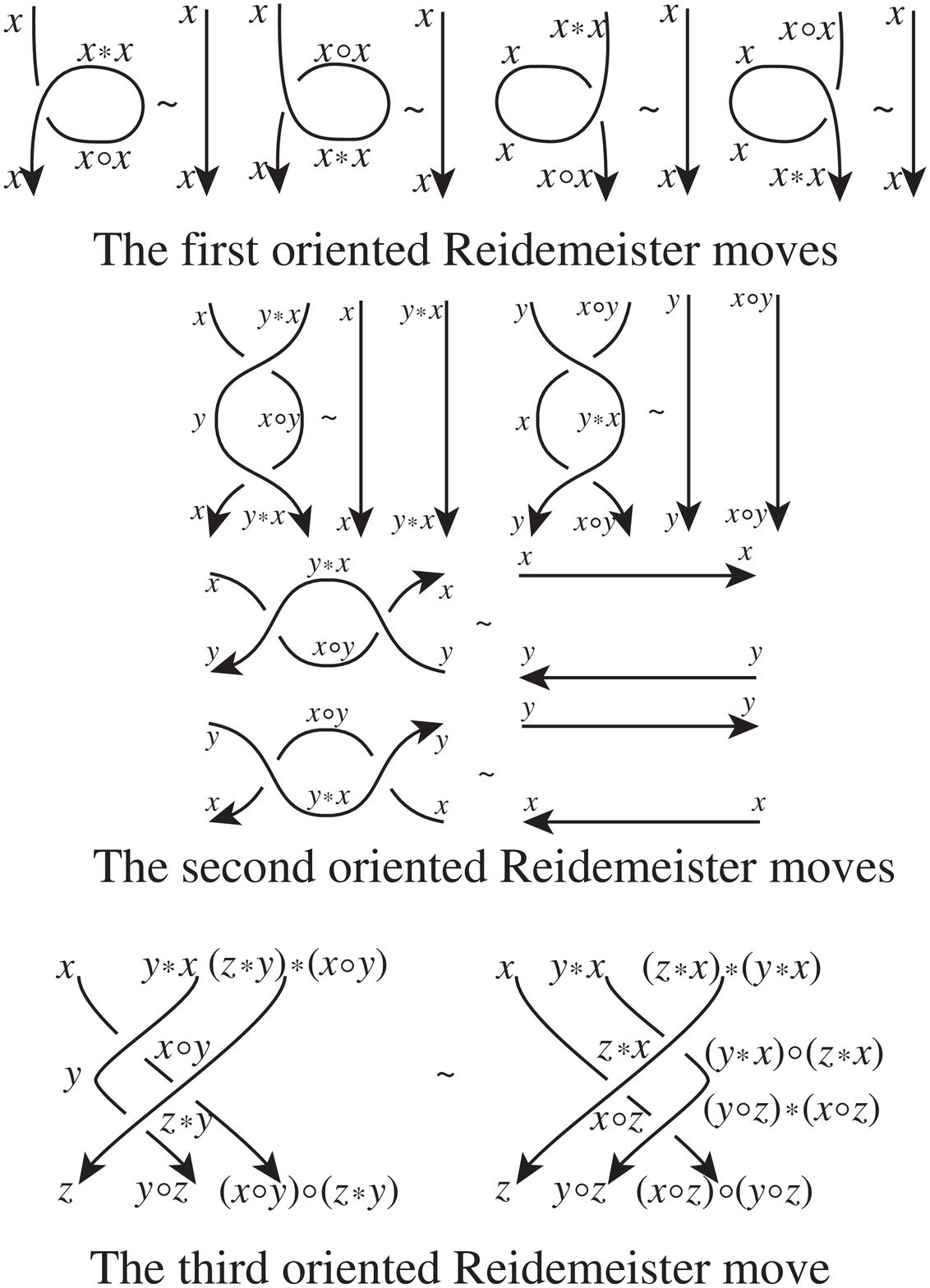}
  \caption{The oriented Reidemeister moves}\label{rdmst}
 \end{figure}

 \subsection{Parity}

Now we show that the Gaussian parity, see~\cite{IMN2,IMN,IMN3,Mant31,Mant32,Mant33,Mant35,Mant39,Mant36,Mant38,Mant40,Mant41,Mant42,ManIly}, can be recovered from biquandle. First, we define a parity and Gauss diagram.

Let $\mathcal{K}$ be a (classical or virtual) knot. Let us define the category $\mathfrak{K}$ of diagrams of the knot $\mathcal{K}$. The objects of $\mathfrak{K}$ are knot diagrams of $\mathcal{K}$ and morphisms of the category $\mathfrak{K}$ are (formal) compositions of {\em elementary morphisms}. By an {\em elementary morphism} we mean
 \begin{itemize}
  \item
an isotopy of diagram (a diffeomorphism of the plane on itself preserving the orientation of the plane);
  \item
a (classical) Reidemeister move.
 \end{itemize}

 \begin{definition}
A {\em partial bijection} of sets $X$ and $Y$ is a triple $(\widetilde X,\widetilde Y,\phi)$, where $\widetilde X\subset X$, $\widetilde Y\subset Y$ and $\phi\colon\widetilde X\to \widetilde Y$ is a bijection.
 \end{definition}

 \begin{remark}
Since the number of vertices of a diagram may change under Reidemeister moves (it is the case of third Reidemeister moves), there is no bijection between the sets of vertices of two diagrams connected by a sequence of Reidemeister moves. To construct any connection between two sets of vertices we have introduced the notion of a partial bijection which means just the bijection between the subsets of vertices corresponding to each
other in the two diagrams.
 \end{remark}

Let us denote by $\V$ the {\em vertex functor} on $\mathfrak{K}$, i.e.\ a functor from $\mathfrak{K}$ to the category, which objects being finite sets and morphisms are partial bijections. For each knot diagram $K$ we define $\V(K)$ to be the set of classical crossings of $K$, i.e.\ the vertices of the underlying $4$-valent graph. Any elementary morphism $m\colon K\to K'$ naturally induces a partial bijection $m_*\colon\V(K)\to\V(K')$.

Let $A$ be an abelian group.

 \begin{definition}\label {def:parity}
A {\em parity $p$ on diagrams of a knot $\mathcal{K}$ with coefficients in $A$} is a family of maps $p_K\colon \V(K)\to A$, $K\in\ob(\mathfrak{K})$, such that for any elementary morphism $m\colon K\to K'$ the following holds:
 \begin{itemize}
  \item[1)]
$p_{K'}(m_*(v))=p_K(v)$ provided that $v\in\V(K)$ and there exists $m_*(v)\in\V(K')$;
  \item[2)]
$p_K(v)=0$ if $m$ is a decreasing first Reidemeister move applied to $K$ and $v$ is the disappearing crossing of $K$;
   \item[3)]
$p_K(v_1)+p_K(v_2)=0$ if $m$ is a decreasing second Reidemeister move and $v_1,\,v_2$ are the disappearing crossings;
   \item[4)]
$p_K(v_1)+p_K(v_2)+p_K(v_3)=0$ if $m$ is a third Reidemeister move and $v_1,\,v_2,\,v_3$ are the crossings participating in this move.
  \end{itemize}
 \end{definition}

 \begin{remark}
Note that each knot may have its own group $A$, and, therefore, different knots generally have different parities.

We will be interested in the group $\mathbb{Z}_2$, because interesting examples of parity have this group.
 \end{remark}

Since Gau{\ss} time it is known that knots can be encoded by chord diagrams with certain additional information. If we view a knot diagram as an immersion of the circle into the plane, we can obtain a {\em chord diagram} by joining those points on the circle which are mapped to the same point under the immersion, by chords.

 \begin{definition}
A {\em Hamiltonian cycle} on a graph is a cycle passing through all vertices of the graph. By a {\em chord diagram} we mean a cubic graph consisting of one selected non-oriented Hamiltonian cycle ({\em core circle} or {\em circle}\/) and a set of non-oriented edges ({\em chords}\/) connecting points on the cycle, moreover, distinct chords have no common points on the cycle.

We say that two chords of a chord diagram are {\em linked} if the ends of one chord belong to two different connected components of the complement to the ends of the other chord in the core circle. Otherwise, we say that chords are {\em unlinked}.
 \end{definition}

 \begin{remark}
As a rule, a chord diagram is depicted on the plane as the Euclidean circle with a collection of chords connecting end points of chords (intersection points of chords which appear as artifacts of drawing chords do not count as vertices).
 \end{remark}

Having a knot diagram (classical or virtual) one can assign to it a chord diagram with an additional structure, the {\em Gauss
diagram}~\cite{GPV,PV}.

 \begin{definition}
Let $K$ be an oriented knot diagram. Let us fix a point on the diagram distinct from a vertex of the diagram. The {\em Gauss diagram} $G(K)$ corresponding to $K$ is the chord diagram consisting of the core circle (with a point, distinct from a vertex, fixed) on which the preimages of the overcrossing and the undercrossing for each crossing are connected by an arrow oriented from the preimage of the overcrossing to the preimage of the undercrossing. Moreover, each arrow is endowed with a sign equal to the sign of the crossing, i.e., the sign is equal to $1$ for a crossing $\skcrro$ and $-1$ for a crossing $\skcrlo$.
 \end{definition}

The Gauss diagram of the right-handed trefoil is shown in Fig.~\ref{gdiagr}.

 \begin{figure}
\centering\includegraphics[width=250pt]{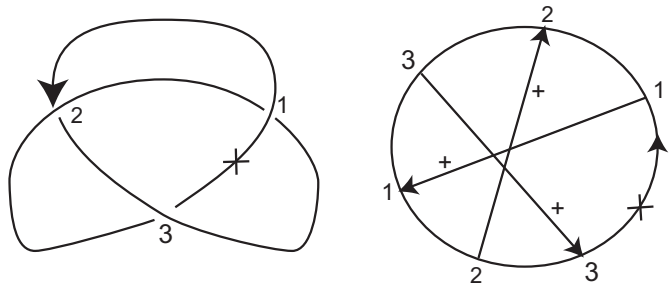} \caption{The Gauss diagram of the right-handed trefoil} \label{gdiagr}
 \end{figure}

 \begin{remark}
It is not difficult to rewrite the Reidemeister moves in the language of Gauss diagrams. As a result, we can think of a knot as
an equivalence class of Gauss diagrams modulo formal moves.

If we consider an arbitrary chord diagram and try to depict a classical knot corresponding to it (for any orientation and signs of
chords) then, in many cases, we shall fail, see~\cite{CE1,CE2,Mant22,RR}. Let us consider the chord diagram $D$ depicted in Fig.~\ref {chdiag}. It is easy to see that there is no knot diagram such that Gauss diagram of which is $D$ for whatever
choice of orientations and signs of chords~\cite{ViK}.

Admitting arbitrary chord diagrams (with arrows and signs) as Gauss diagrams and the Reidemeister moves as moves on all chord diagrams, we exactly get a new theory, the theory of virtual knots~\cite{GPV}.
 \end{remark}

 \begin{figure}
\centering\includegraphics[width=100pt]{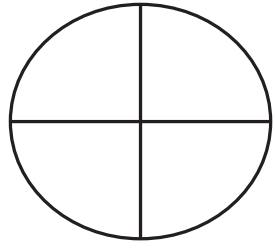}
\caption{``Non-realisable'' chord diagram} \label{chdiag}
 \end{figure}

Let us consider some examples of parities for some knot theories. Parities were first constructed for free knots (a simplification of virtual knots, at each crossing we remember only the structure of opposite edges).

 \begin{example}
Let $A=\mathbb{Z}_2$ and $K$ be a virtual knot diagram. Let us construct the map $gp_K\colon\V(K)\to\mathbb{Z}_2$ by putting
$gp_K(v)=0$ if the number of chords linked with the chord corresponding to $v$ in the Gauss diagram $G(K)$ is even (an {\em even crossing}), and $gp_K(v)=1$ otherwise (an {\em odd crossing}).

 \begin{statement}[see~\cite{Mant35}]
The map $gp$ is a parity for virtual knots.
 \end{statement}

 \begin{definition}
The parity $gp$ is called the {\em Gaussian parity}.
 \end{definition}
 \end{example}

 \begin{example}
Here we call the theory of two-component virtual links just knot theory.

Let $A=\mathbb{Z}_2$ and $L=L_{1}\cup L_{2}$ be a virtual (or classical) link diagram. Let us define the map $p_L\colon\V(L)\to\mathbb{Z}_2$ by putting $p_L(v)=0$ if  $v$ is a classical crossing formed by a single component, and
$p_L(v)=1$ if  $v$ is a classical crossing formed by two components.

 \begin{statement}
The map $p$ is a parity for the set of two-component  virtual links.
 \end{statement}
 \end{example}

 \begin{example}\label{ex:parbiq}
Let $K$ be an oriented knot diagram. We take the set $X=\mathbb{Z}_{2}$ consisting of two elements with the operations $x\circ y=x*y=x+1\pmod 2$.

Let us construct the map $bp_K\colon\V(K)\to\mathbb{Z}_2$ by putting $bp_K(v)=0$ if the crossing has the structure
 \begin{center}
\crev
 \end{center}
(an {\em even crossing}), and $gp_K(v)=1$ otherwise, i.e.\ it has the structure
 \begin{center}
 \crod.
 \end{center}
(an {\em odd crossing}).

The following theorem follows from a direct check.

 \begin{theorem}
The map $bp$ is a parity and coincides with the map $gp$.
 \end{theorem}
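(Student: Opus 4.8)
The plan is to verify both assertions --- that $bp$ is a parity and that $bp = gp$ --- by direct inspection, exactly as the statement anticipates. The two claims are naturally coupled: once we know that the colouring by $X = \mathbb{Z}_2$ with $x \circ y = x \ast y = x+1 \pmod 2$ forces the colour on an edge to change by $1$ at every crossing, the colour becomes a ``mod $2$ count'' along the diagram, and the parity of a crossing recorded by $bp$ will coincide with a Gauss-diagram linking count, which is precisely $gp$.

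First I would check that $X = \mathbb{Z}_2$ with the stated operations is genuinely a biquandle, i.e.\ that (R1)--(R4) from Definition~\ref{def:biq} hold. Since $x \circ y = x \ast y = x+1$, we have (R1) trivially; the maps $\alpha_y, \beta_y$ are both $x \mapsto x+1$, which is an involution, so (R2) holds; the map $S(x,y) = (y+1, x+1)$ is an involution on $X \times X$, giving (R3); and each exchange law in (R4) reduces to $x + 2 = x + 2$ after expanding, so it holds. Next I would describe the colourings: the crossing relation \biqcol\ with these operations says that crossing an over- or under-strand flips the colour. Hence along any edge-path of the underlying $4$-valent graph the colour alternates, and one checks that a consistent $\mathbb{Z}_2$-colouring of a knot diagram always exists (for a long knot, say, start with $0$ and propagate; for a closed diagram one must check the total flip count around the unicursal circle is even --- which it is, since there are two strand-passages per crossing). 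I would then identify $bp_K(v)$ in colour terms: a crossing is ``even'' (the configuration \crev) exactly when the two strands through $v$ carry colours that agree in a prescribed way, and ``odd'' (the configuration \crod) otherwise.

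For the parity axioms of Definition~\ref{def:parity}, I would trace a fixed colouring through each elementary morphism. Axiom 1) (invariance under the morphism) holds because a crossing surviving a Reidemeister move keeps its incident colours up to the relabelling, hence keeps its $bp$-value --- here one uses the key observation already emphasised in the paper, that the parity of a crossing is preserved by $\Omega 3$ even though colours elsewhere change. Axiom 2): a crossing removed by a decreasing $\Omega 1$ has both its strands on the same edge, forced to the same colour, so it is even, $bp = 0$. Axiom 3): the two crossings removed by a decreasing $\Omega 2$ involve the same pair of incoming colours in mirror configurations, so their $bp$-values are equal, and $bp(v_1) + bp(v_2) = 0$ in $\mathbb{Z}_2$. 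Axiom 4): for an $\Omega 3$ move one enumerates the colours on the six (or fewer, after identification) edges entering the move; each flip relation constrains them so that the sum $bp(v_1) + bp(v_2) + bp(v_3)$ is forced to $0 \pmod 2$ --- this is the one case requiring a genuine, if short, case check, and it is the natural candidate for the ``main obstacle,'' though it is entirely mechanical. Finally, to see $bp = gp$, I would argue that the $\mathbb{Z}_2$-colour of an edge equals (mod $2$) the number of chord-endpoints one has passed when travelling along the core circle from the basepoint, so that at a crossing $v$ the colours of the two incident strands differ by the parity of the number of chords whose endpoints separate the two preimages of $v$ on the circle --- i.e.\ the number of chords linked with the chord of $v$. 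Thus $bp_K(v) = 0$ iff that linking number is even iff $gp_K(v) = 0$, which is exactly the definition of the Gaussian parity. The only subtlety here is bookkeeping the orientation and the basepoint, and confirming the correspondence does not depend on the choice of basepoint (which follows since moving the basepoint past a chord endpoint flips all colours simultaneously, leaving every pairwise difference, and hence every $bp$-value, unchanged).
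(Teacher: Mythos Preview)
Your proposal is correct and matches the paper's approach exactly: the paper states only that the theorem ``follows from a direct check'' and gives no further argument, and your outline is precisely such a direct verification of the biquandle axioms, the parity axioms, and the identification of the $\mathbb{Z}_2$-colour with a mod~$2$ count of chord endpoints along the core circle. The only place to be careful is the bookkeeping you already flag --- at a kink and in the $\Omega 3$ correspondence one must match the convention encoded in the figures \texttt{crev}/\texttt{crod} to decide whether ``even'' means the two reference colours agree or differ --- but once that convention is fixed the computations (e.g.\ that the three $bp$-values at an $\Omega 3$ triangle are $x+y,\ y+z,\ x+z$ before and after, summing to $0$ and individually preserved) go through exactly as you describe.
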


 \end{example}

\section{Biquandle, parity and parity-biquandle bracket}\label{sec:parbiqbr}

 \subsection{The parity bracket}\label{subsec:parbr}

The first example of the parity bracket appeared in~\cite{Mant31}. That bracket was constructed for the Gaussian parity and played a significant role in, for instance, proving minimality theorems, reducing problems about diagrams to questions about graphs etc. Also the bracket was generalised for the case of graph-links, see~\cite{IM2}, and allowed the authors to prove the existence of non-realisable graph-links, for more details see~\cite{IM2}.

In this subsection we consider the parity bracket for any parity valued in $\mathbb{Z}_2$ (see~\cite{IMN2,IMN,IMN3,Mant31,Mant32,Mant33,Mant35,Mant39,Mant36,Mant38,Mant40,Mant41,Mant42,ManIly}). This bracket is a generalisation of the bracket from~\cite{Mant31}.

  \begin{definition}
Let $\alpha$ be a formal variable. Let $\mathfrak{G}_\alpha$ be the set of all equivalence classes of $4$-valent graphs with a cross structure modulo the following equivalence relations:
 \begin{enumerate}
  \item[1)]
the second Reidemeister move, see Fig.~\ref{rdmst2fn};
  \item[2)]
$L\sqcup \bigcirc=\alpha L$.
 \end{enumerate}
Consider the linear space $\mathbb{Z}_2\mathfrak{G}_\alpha$.
 \end{definition}

 \begin{figure}
  \centering\includegraphics[width=170pt]{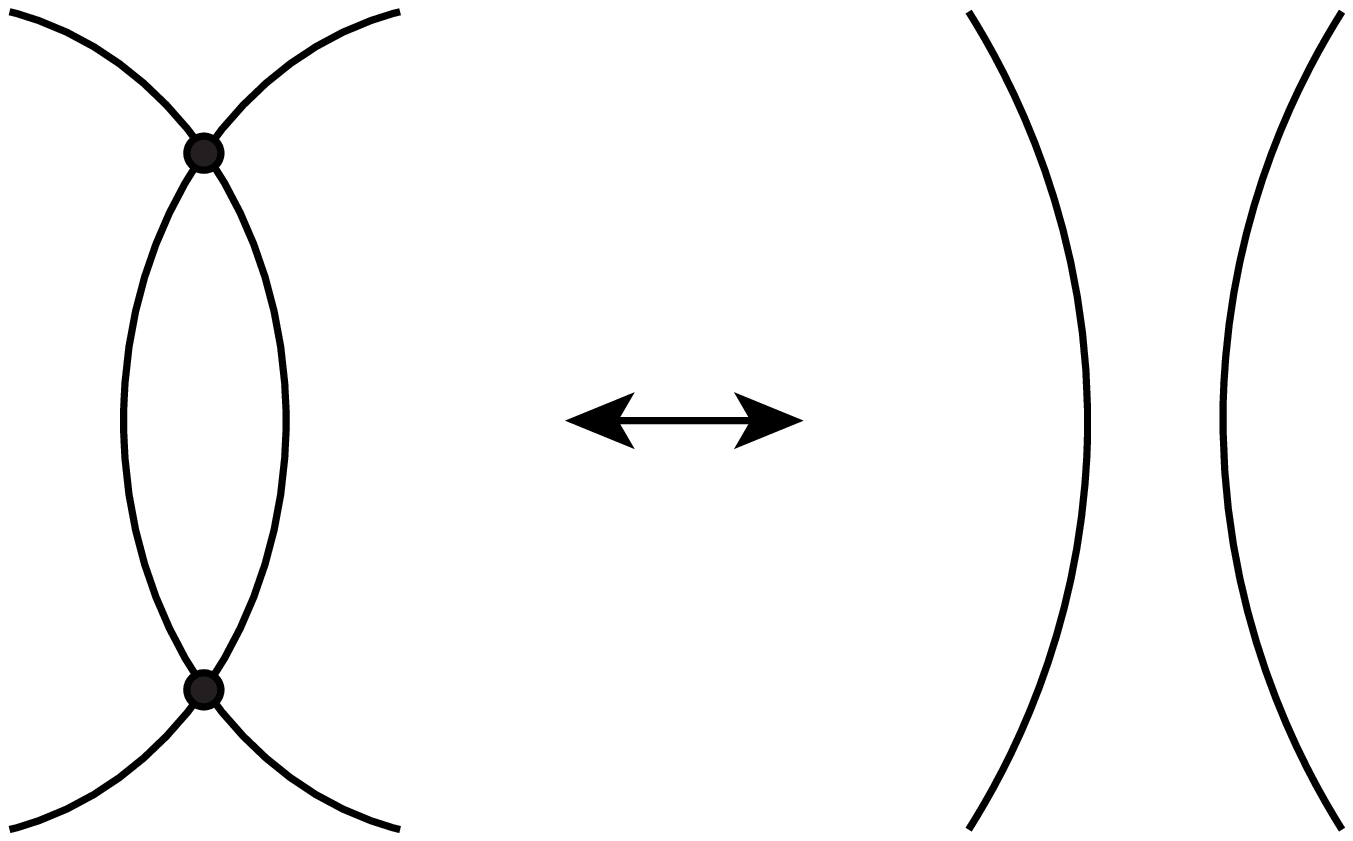}
  \caption{Second Reidemeister move}\label{rdmst2fn}
 \end{figure}

 \begin{remark}
It is worth saying that the recognition problem is easy solvable when we have only second Reidemeister moves. Indeed, each equivalence class of $4$-valent graphs with cross structure has a unique minimal representative which can be obtained from any representative of this graph by a ``monotonous descent algorithm.''
 \end{remark}

Let $\mathcal K$ be a virtual knot, $p$ be a parity on diagrams of $\mathcal K$ with coefficients from the group $\mathbb{Z}_2$, and $K$ be a diagram of $\mathcal K$ with $\V(K)=\{v_1,\dots,v_n\}$. For each element $s=(s_1,\dots,s_n)\in\{0,1\}^n$ we define $K_s$ to be equal to the sum of $2^l$ graphs obtained from $K$ by a smoothing, the positive smoothing $\skcrossr\to\skcrh$ or the negative smoothing $skcrossr\to\skcrv$) at each vertex $v_i$ if $s_i=1$, where $l$ is the number of $1$ in the $(s_1,\dots,s_n)$. Define $q_{K,s}(v_i)=p_K(v_i)$ if $s_i=0$, and $q_{K,s}(v_i)=1-p_K(v_i)$ if $s_i=1$.

 \begin{definition}
The {\em parity bracket} $[K]$ is the following sum:
 $$
\sum\limits_{s\in\{0,1\}^n}\prod\limits_{i=1}^nq_{K,s}(v_i)
K_s\in\mathbb{Z}_2\mathfrak{G}_0.
 $$
 \end{definition}

 \begin{remark}
In fact, in the definition of the parity bracket we take only those summands where even crossings are smoothed.
 \end{remark}

 \begin{theorem}[see~\cite{IMN2,IMN,IMN3,Mant31,Mant32,Mant33,Mant35,Mant39,Mant36,Mant38,Mant40,Mant41,Mant42,ManIly}]
If two virtual knot diagrams $K$ and $K'$ represent the same knot then the following equality holds in $\mathbb{Z}_2\mathfrak{G}${\em:}
 $$
[K]=[K'].
 $$
 \end{theorem}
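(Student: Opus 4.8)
The plan is to show invariance of $[K]$ under each (classical) Reidemeister move, since by the detour move virtual crossings play no role. The proof is the standard ``check the states'' argument, but organized around the parity: the key observation (already recorded in the Remark preceding the theorem) is that in $[K]$ the only surviving summands are those in which every even crossing is smoothed and every odd crossing is left intact. This is because $q_{K,s}(v_i)$ is a product of factors, and for an even crossing $v_i$ (so $p_K(v_i)=0$) the factor is $0$ when $s_i=0$, forcing $s_i=1$; symmetrically for an odd crossing ($p_K(v_i)=1$) the factor is $0$ when $s_i=1$, forcing $s_i=0$. Hence $[K] = \sum_{s} K_s$ where $s$ ranges over the \emph{single} state in which odd crossings are kept and even crossings are resolved in both ways, and each $K_s$ is itself a sum over the $2^{(\#\text{even})}$ choices of smoothing. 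Since we work over $\mathbb{Z}_2$ there are no signs to track.

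First I would handle $\Omega 1$. A curl creates one new crossing; by axiom 2) in Definition~\ref{def:parity} this crossing has parity $0$, so it must be smoothed in $[K]$, and one of the two smoothings yields a graph with a free-standing circle while the other reproduces the original diagram (up to the relation $L\sqcup\bigcirc = \alpha L$ — but in $\mathbb{Z}_2\mathfrak{G}_0$ the variable $\alpha$ is set to $0$, so the circle-bearing term dies and only the original graph survives). Next, $\Omega 2$: the two new crossings $v_1,v_2$ satisfy $p_K(v_1)+p_K(v_2)=0$ by axiom 3), so over $\mathbb{Z}_2$ they have equal parity. If both are even, both get smoothed; the four resulting smoothings collapse in pairs under the second Reidemeister move on $4$-graphs (relation 1) defining $\mathfrak{G}_\alpha$) and under $L\sqcup\bigcirc=0$, leaving exactly the diagram without the two crossings. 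If both are odd, neither is smoothed, and the two intact crossings together form precisely a second-Reidemeister configuration in the $4$-graph, which is killed by relation 1). Either way the value is unchanged.

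The main obstacle — and the heart of the proof — is $\Omega 3$. Here axiom 4) gives $p_K(v_1)+p_K(v_2)+p_K(v_3)=0$ in $\mathbb{Z}_2$, so among the three crossings involved the number of odd ones is $0$ or $2$. One argues case by case on which of the three are even: the even ones are smoothed (in both ways), the odd ones are retained. When all three are even, one expands both sides into $8$ states and matches them; the states on the two sides of $\Omega 3$ differ only by planar isotopy or by a second Reidemeister move on the underlying $4$-graph, so they agree in $\mathfrak{G}_0$. When exactly two crossings are odd, the retained pair of odd crossings forms a bigon after the third remaining (even) crossing is smoothed one way, so that term is again canceled via relation 1), and the surviving terms on the two sides coincide. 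I would present this as a finite diagrammatic verification, noting that it is essentially the computation already carried out in the cited references for the Gaussian parity and that the only property of the parity used is Definition~\ref{def:parity}; the parity axioms were precisely engineered so that this matching goes through. Finally, since any equivalence of virtual diagrams is a composition of elementary morphisms and detour moves, invariance under each move gives $[K]=[K']$.
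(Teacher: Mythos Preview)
The paper does not prove this theorem at all; it is stated with the tag ``see~[\dots]'' and deferred entirely to the cited literature. Your outline is the standard argument found there and is structurally correct: the parity axioms force exactly one surviving state pattern (smooth the even crossings, keep the odd ones), and one then checks $\Omega 1,\Omega 2,\Omega 3$ by cases on the parities of the involved crossings.

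Two local mechanisms are misidentified, however. In the all-even subcases of $\Omega 2$ and $\Omega 3$ every crossing is smoothed, so the resulting states have \emph{no} graphical vertices and relation~1) (the second Reidemeister move on framed $4$-graphs) is vacuous there; you cannot invoke it. For $\Omega 2$ with both crossings even, what actually happens is that the two mixed smoothings produce the \emph{same} graph and cancel over $\mathbb{Z}_2$, one of the two pure smoothings carries a free circle and dies via $L\sqcup\bigcirc=0$, and the remaining one is the original picture. For $\Omega 3$ with all three even you are simply checking the Kauffman-type skein identity over $\mathbb{Z}_2$ with $A=B=1$ and $\delta=0$; this holds, but by state-matching and circle-killing, not by relation~1). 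Second, in $\Omega 3$ with two odd crossings, the term containing the bigon of graphical vertices is not ``canceled'' to zero by relation~1); rather, relation~1) rewrites it as a graph which then \emph{coincides} with the corresponding term on the other side of the move, while the other smoothing of the even crossing gives isotopic pictures on both sides. With these mechanisms stated correctly, your argument is complete and is exactly the proof in the references the paper cites.
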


 \begin{remark}
In particular, if $K$ is irreducible and odd (see definitions in~\cite{IMN2,IMN,IMN3,Mant31,Mant32,Mant33,Mant35,Mant39,Mant36,Mant38,Mant40,Mant41,Mant42,ManIly}) then $[K]=\widetilde{K}$, hence $K$ can be obtained from any equivalent diagram $K'$ by smoothing $K'$ at some crossings.
 \end{remark}

 \subsection{The biquandle bracket}

Let us recall a key example from~\cite{NOR}. Let $L$ be an oriented virtual link diagram and let us colour its edges by a finite biquandle $(X,\circ,\ast)$ (in~\cite{NOR} $\circ$ and $\ast$ are denoted by $\unrhd$ and $\overline{\rhd}$ respectively). Further, we fix a commutative ring $R$ with unity and denote the set of units of $R$ by $R^\prime$. Then for such a colouring we can get the following set of skein relations:
 \begin{center}
\biqbrnor
 \end{center}
(we disregard virtual crossings and consider the resulting states as collections of intersecting curves on the plane).

Here $A,\,B\colon X\times X\to R^\prime$ and $A_{x,y}=A(x,y),\,B_{x,y}=B(x,y)$. In order that the skein relations give us a link invariant we have to consider two distinguished elements $\delta\in R$ being the value of a simple closed curve (virtual crossings are disregarded) and $w\in R^\prime$ being the value of a positive kink ($w^{-1}$ being the value of a negative kink) and $A,\,B,\delta,w$  should satisfy the following relations:
 \begin{enumerate}
  \item[(i)]
$\delta A_{x,x}+B_{x,x}=w$ and $\delta A^{-1}_{x,x}+B^{-1}_{x,x}=w^{-1}$ for $\forall\,x\in X$,
  \item[(ii)]
$\delta=-A_{x,y}B^{-1}_{x,y}-A^{-1}_{x,y}B_{x,y}$ for $\forall\,x,\,y\in X$,
  \item[(iii)]
   \begin{align*}
A_{x,y}A_{y,z}A_{x\circ y,z\ast y}=&A_{x,z}A_{y\ast x,z\ast x}A_{x\circ z,y\circ z},\\
A_{x,y}B_{y,z}B_{x\circ y,z\ast y}=&B_{x,z}B_{y\ast x,z\ast x}A_{x\circ z,y\circ z},\\
B_{x,y}A_{y,z}B_{x\circ y,z\ast y}=&B_{x,z}A_{y\ast x,z\ast x}B_{x\circ z,y\circ z},\\
A_{x,y}A_{y,z}B_{x\circ y,z\ast y}=&A_{x,z}B_{y\ast x,z\ast x}A_{x\circ z,y\circ z}+
A_{x,z}A_{y\ast x,z\ast x}B_{x\circ z,y\circ z}\\
&+\delta A_{x,z}B_{y\ast x,z\ast x}B_{x\circ z,y\circ z}+B_{x,z}B_{y\ast x,z\ast x}B_{x\circ z,y\circ z},\\
B_{x,z}A_{y\ast x,z\ast x}A_{x\circ z,y\circ z}=&B_{x,y}A_{y,z}A_{x\circ y,z\ast y}+A_{x,y}B_{y,z}A_{x\circ y,z\ast y}\\
&+\delta B_{x,y}B_{y,z}A_{x\circ y,z\ast y}+B_{x,y}B_{y,z}B_{x\circ y,z\ast y}
   \end{align*}
for $\forall\,x,\,y,\,z\in X$.
 \end{enumerate}

 \begin{definition}
A pair of maps $A,\,B\colon X\times X\to R$ together with elements $\delta\in R$ and $w\in R^\prime$ satisfying the above relations is called a {\em biquandle bracket} (or an {\em $X$-bracket}).
 \end{definition}

Let $L$ be an oriented (virtual) link diagram with $n$ crossings and let
 $$
\mathcal{B}(L)=\langle x_1,\dots,x_{2n}\,|\,\mathrm{CR},\,\mathrm{R1},\,\mathrm{R2},\,\mathrm{R3},\,\mathrm{R4}\rangle.
 $$
be its fundamental biquandle. There are $2^n$ states of $L$, i.e.\ at each crossing we have either the positive smoothing or the negative smoothing (we disregard virtual crossings). For each state we have the product of $n$ factors of $A^{\mp1}_{x,y}$, or $B^{\mp1}_{x,y}$ times $\delta^k$, where $k$ is the number of circles in the state.

 \begin{definition}
The {\em fundamental biquandle bracket value} for $L$ is the sum of the contributions times $w^{-\mathrm{wr}(L)}$, where $\mathrm{wr}(L)$ is the writhe number of $L$.
 \end{definition}


 \begin{definition}
Let $X$ be a finite biquandle, $f\in\mathrm{Hom}(\mathcal{B}(L),X)$ be an $X$-coloring and $\beta$ be an $X$-bracket. We set the value of the fundamental biquandle bracket value in $f$ for $\beta$ to be $\beta(f)\in R$.

The {\em biquandle bracket multiset} of $L$ is the following multiset:
 $$
\Phi^{\beta,M}_X(L)=\{\beta(f)\,|f\in\mathrm{Hom}(\mathcal{B}(L),X)\}.
 $$

The {\em biquandle bracket polynomial} of $L$ is
 $$
\Phi^{\beta}_X(L)=\sum\limits_{f\in\mathrm{Hom}(\mathcal{B}(L),X)}u^{\beta(f)}.
 $$
 \end{definition}

 \begin{theorem}[see~\cite{NOR}]
The biquandle bracket multiset and the biquandle bracket polynomial are invariants of virtual links.
 \end{theorem}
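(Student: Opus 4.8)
The plan is to reduce the statement to a local check at each Reidemeister move, exactly as in~\cite{NOR}. Both the multiset $\Phi^{\beta,M}_X(L)$ and the polynomial $\Phi^\beta_X(L)$ are built solely from the scalars $\beta(f)\in R$ as $f$ ranges over $\mathrm{Hom}(\mathcal B(L),X)$, so it suffices to produce, for any two diagrams $L,L'$ of the same virtual link, a bijection $\mathrm{Hom}(\mathcal B(L),X)\to\mathrm{Hom}(\mathcal B(L'),X)$, $f\mapsto f'$, with $\beta(f)=\beta(f')$. The bijection on colourings is already available: the biquandle axioms (R1)--(R4) are precisely the conditions guaranteeing that colourings of the edges match up across each of the classical Reidemeister moves, and the detour move (and the virtual Reidemeister moves) impose no relations since virtual crossings are disregarded. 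Hence the whole content is the equality $\beta(f)=\beta(f')$. Since $\beta(f)$ is visibly unchanged under planar isotopy and under the detour move (the collection of states of $L$, together with the number of circles in each, does not change when one only moves virtual crossings around), one is left with the three classical moves $\Omega 1,\Omega 2,\Omega 3$. For each of them we fix the colouring of the part of the diagram lying outside a small disk in which the move is performed, list the states of the two local tangles, and compare the two local contributions to $\beta(f)$, each being a product of the appropriate factors $A^{\pm1}_{x,y},B^{\pm1}_{x,y}$ multiplied by $\delta^{k}$, where $k$ counts the closed circles created inside the disk.

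For $\Omega 1$: a positive kink on an edge coloured $x$ (the interior arc is forced to be coloured consistently by axiom (R1), which gives $x\circ x=x\ast x$) contributes, upon resolving its single crossing, the factor $\delta A_{x,x}+B_{x,x}$ --- the $\delta$ appearing because one of the two smoothings produces a disjoint circle. By relation~(i) this factor equals $w$, and since the positive kink raises the writhe by one, it is absorbed by the extra $w^{-1}$ in the normalisation $w^{-\mathrm{wr}(L)}$; the negative kink is handled symmetrically using $\delta A^{-1}_{x,x}+B^{-1}_{x,x}=w^{-1}$. For $\Omega 2$: the two crossings of the local tangle have four joint states; by axiom (R2) the colours on the external arcs of the tangle on the two sides of the move agree, and among the four states some reproduce the trivial two-strand tangle while the others reproduce it together with a disjoint circle. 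Relation~(ii), rewritten as the identity among $A_{x,y},B_{x,y},\delta$ that it is, is exactly what is needed for the total of these four contributions to coincide with the single contribution of the trivial tangle on the other side.

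The move $\Omega 3$ is the crux and the step I expect to be the main obstacle. Both sides of the move carry three crossings, coloured by the colouring extending the fixed boundary data $x,y,z$ (this extension is unique and its consistency on corresponding internal arcs of the two sides is exactly what axioms (R3) and (R4) guarantee). Each side has $2^3=8$ states. One pairs up the states that give literally the same picture on both sides and cancels them, and then checks that the remaining discrepancy between the two sides collapses precisely into the five identities of relation~(iii): the first three relations account for the states whose smoothing exhibits a single homogeneous $A$- or $B$-pattern and no new circle, while the last two relations --- the ones carrying the $\delta$-terms --- account for the states in which a circle is produced or in which several smoothing patterns must be collected together. Carrying this out requires careful bookkeeping: tracking how each of the edge colours $x\circ y$, $z\ast y$, $y\ast x$, $z\ast x$, $x\circ z$, $y\circ z$ propagates through each of the eight smoothed tangles, identifying the colours on the edges of corresponding states on the two sides via (R3) and (R4), and counting the closed circles in each state. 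Once the $8$-versus-$8$ comparison has been shown to reduce to relation~(iii), we conclude $\beta(f)=\beta(f')$ for $\Omega 3$ as well.

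Putting the pieces together: for each elementary move the correspondence $f\mapsto f'$ is a bijection $\mathrm{Hom}(\mathcal B(L),X)\to\mathrm{Hom}(\mathcal B(L'),X)$ preserving $\beta$, hence the multiset $\{\beta(f)\}$, and therefore the polynomial $\sum_f u^{\beta(f)}$, is unchanged; since any two diagrams of a virtual link are joined by a finite sequence of planar isotopies, classical Reidemeister moves and detour moves, the multiset $\Phi^{\beta,M}_X(L)$ and the polynomial $\Phi^\beta_X(L)$ are virtual link invariants.
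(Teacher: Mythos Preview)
Your argument is correct and is precisely the standard proof from~\cite{NOR} that the paper is citing. Note, however, that the paper does not actually supply its own proof of this statement: it simply records the theorem with the reference ``see~\cite{NOR}'' and moves on. The only proof the paper gives is for the more general parity-biquandle bracket (the next theorem), and there the argument is the same outline you wrote, compressed to three sentences: the biquandle axioms give the bijection on colourings across a Reidemeister move, and the defining relations on $A,B,\delta,w$ (resp.\ $A,\dots,F,\delta,w$) force $\beta(f)=\beta(f')$. Your version is a faithful, more explicit unpacking of that same approach.
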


 \subsection{The parity-biquandle bracket}

Now we are going to bring together the two main ideas: picture-valued invariants and biquandle brackets. For this we will make a distinction between different types of crossings and, having this in mind, instead of the two states we consider three states by adding the state with a graphical vertex. Moreover, the value of the biquandle bracket will be an equivalence class of linear combination of $4$-valent graphs with some coefficients in a commutative ring $R$ with unity.

Let $L$ be an oriented virtual link diagram. Let us consider the following set of skein relations:
 \begin{center}
\biqbrdef
  \end{center}
(virtual crossings are disregarded). Here $A,\,B,\,C,\,D,\,E,\,F\colon X\times X\to R$ and $A_{x,y}=A(x,y)$, $B_{x,y}=B(x,y)$, $C_{x,y}=C(x,y)$, $D_{x,y}=D(x,y)$, $E_{x,y}=E(x,y)$, $F_{x,y}=F(x,y)$.

Our purpose is to write down a whole set of relations such that the skein relations give rise to a picture-valued link invariant. Let us again consider two distinguished elements $\delta\in R$ being the value of a simple closed curve (virtual crossings are disregarded) and $w\in R^\prime$ being the value of a positive kink ($w^{-1}$ being the value of a negative kink) and, moreover, diagrams having graphical vertices are considered modulo the second Reidemeister moves, see Fig.~\ref{rdmst2fn}. The value of the new bracket lies in the module $R\mathfrak{G}_\delta$.

Let us apply skein relations to the oriented Reidemeister moves, see Fig.~\ref{rdmst}. As a result we will get some relations on the coefficients.

The first Reidemeister moves give us the following relations:
 $$
\delta A_{x,x}+B_{x,x}=w,\quad \delta D_{x,x}+E_{x,x}=w^{-1},\quad C_{x,x}=F_{x,x}=0.
 $$
for $\forall\,x\in X$, see Fig.~\ref{bbr1}.

 \begin{figure}
  \centering\includegraphics[width=400pt]{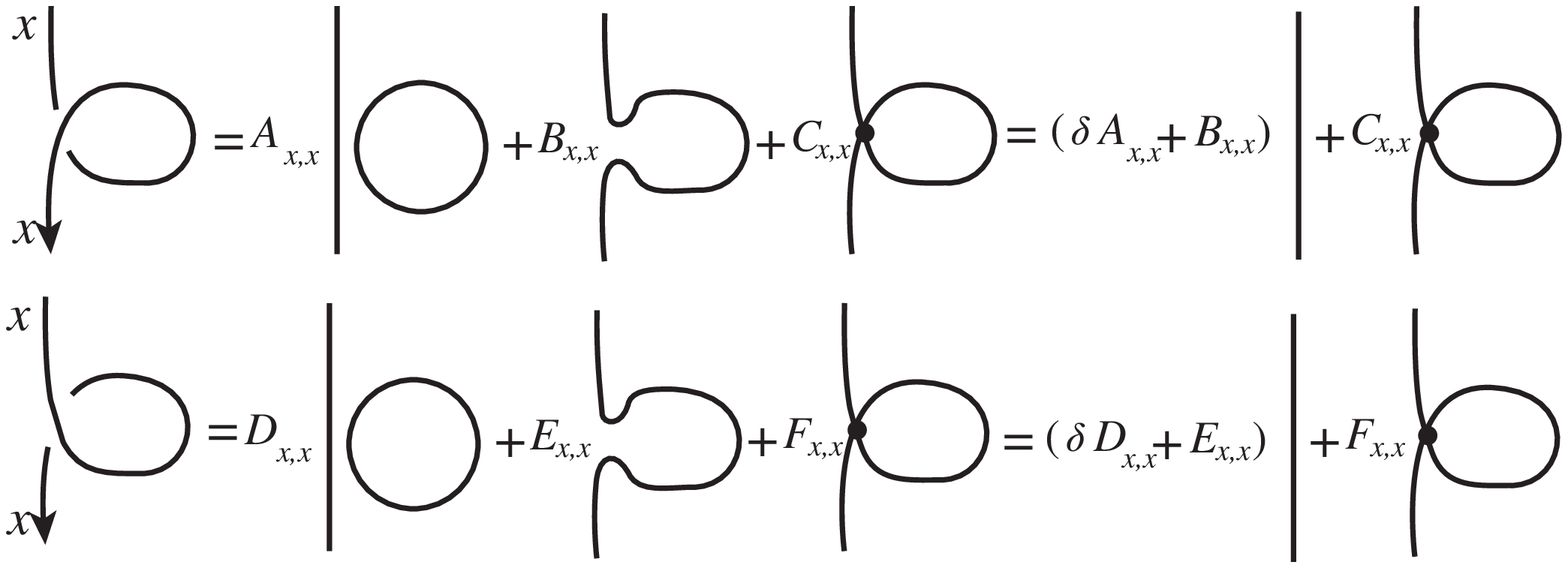}
  \caption{Relations from $\Omega{1}$}\label{bbr1}
 \end{figure}

The first two versions of second Reidemeister moves, in which the strands are oriented in the same direction, give us the following relations:
 \begin{gather*}
A_{x,y}D_{x,y}+C_{x,y}F_{x,y}=1,\quad B_{x,y}F_{x,y}=C_{x,y}E_{x,y}=0,\\
A_{x,y}F_{x,y}+C_{x,y}D_{x,y}=
A_{x,y}E_{x,y}+B_{x,y}D_{x,y}+\delta B_{x,y}E_{x,y}=0
 \end{gather*}
for $\forall\,x,\,y\in X$, see Fig.~\ref{bbr21}.

 \begin{figure}
  \centering\includegraphics[width=400pt]{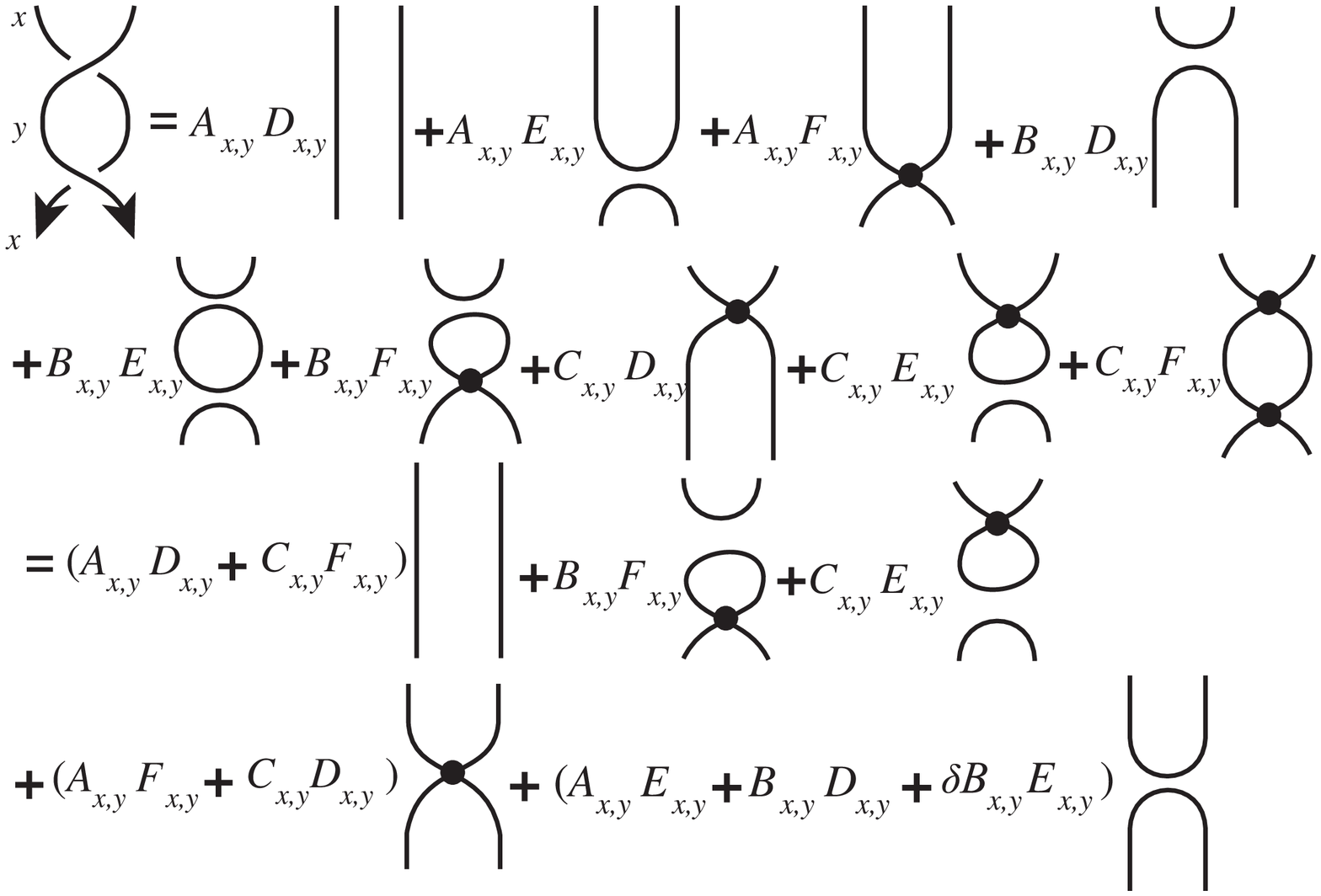}
  \caption{Relations from $\Omega{2}$}\label{bbr21}
 \end{figure}

The last two versions of second Reidemeister moves, in which the strands are oriented in opposite directions, give us the following additional relations:
 \begin{gather*}
B_{x,y}E_{x,y}+C_{x,y}F_{x,y}=1,\quad A_{x,y}F_{x,y}=C_{x,y}D_{x,y}=0,\\
B_{x,y}F_{x,y}+C_{x,y}E_{x,y}=
A_{x,y}E_{x,y}+B_{x,y}D_{x,y}+\delta A_{x,y}D_{x,y}=0
 \end{gather*}
for $\forall\,x,\,y\in X$, see Fig.~\ref{bbr22}.

 \begin{figure}
  \centering\includegraphics[width=400pt]{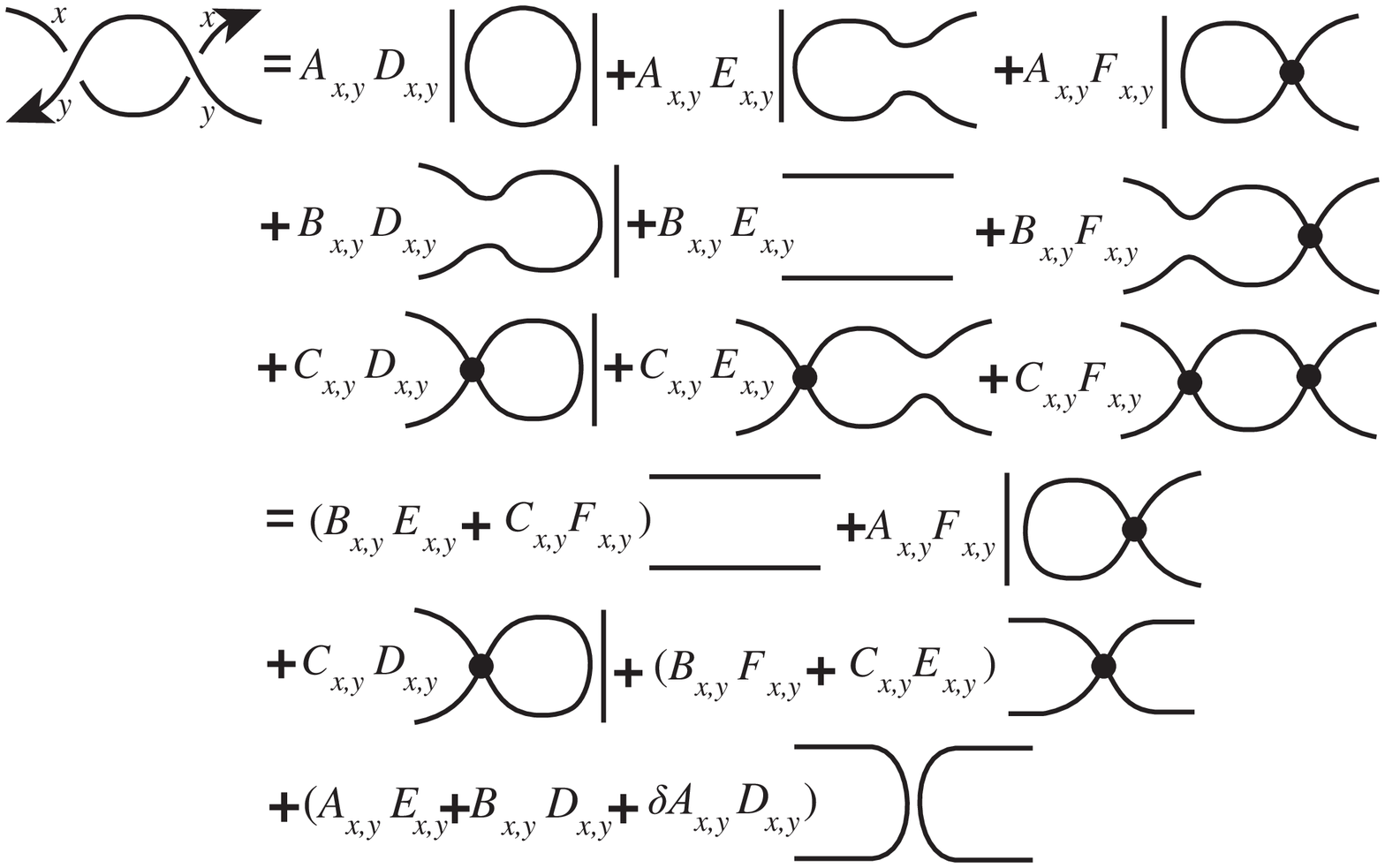}
  \caption{Relations from $\Omega{2}$}\label{bbr22}
 \end{figure}

The third Reidemeister moves give us the following relations:
 \begin{gather*}
A_{x,y}A_{y,z}A_{x\circ y,z\ast y}+C_{x,y}C_{y,z}A_{x\circ y,z\ast y}=A_{x,z}A_{y\ast x,z\ast x}A_{x\circ z,y\circ z}+A_{x,z}C_{y\ast x,z\ast x}C_{x\circ z,y\circ z},\\
A_{x,y}B_{y,z}B_{x\circ y,z\ast y}+C_{x,y}B_{y,z}C_{x\circ y,z\ast y}=B_{x,z}B_{y\ast x,z\ast x}A_{x\circ z,y\circ z}+C_{x,z}B_{y\ast x,z\ast x}C_{x\circ z,y\circ z},\\
B_{x,y}A_{y,z}B_{x\circ y,z\ast y}+B_{x,y}C_{y,z}C_{x\circ y,z\ast y}=B_{x,z}A_{y\ast x,z\ast x}B_{x\circ z,y\circ z}+C_{x,z}C_{y\ast x,z\ast x}B_{x\circ z,y\circ z},\\
A_{x,y}C_{y,z}A_{x\circ y,z\ast y}+C_{x,y}A_{y,z}A_{x\circ y,z\ast y}=C_{x,z}A_{y\ast x,z\ast x}A_{x\circ z,y\circ z},\\
A_{x,y}A_{y,z}C_{x\circ y,z\ast y}=A_{x,z}A_{y\ast x,z\ast x}C_{x\circ z,y\circ z}+A_{x,z}C_{y\ast x,z\ast x}A_{x\circ z,y\circ z},\\
A_{x,y}C_{y,z}B_{x\circ y,z\ast y}=B_{x,z}B_{y\ast x,z\ast x}C_{x\circ z,y\circ z}+C_{x,z}B_{y\ast x,z\ast x}A_{x\circ z,y\circ z},\\
B_{x,y}C_{y,z}B_{x\circ y,z\ast y}+B_{x,z}A_{y,z}C_{x\circ y,z\ast y}=B_{x,z}A_{y\ast x,z\ast x}C_{x\circ z,y\circ z},\\
A_{x,y}B_{y,z}C_{x\circ y,z\ast y}+C_{x,z}B_{y,z}B_{x\circ y,z\ast y}=B_{x,z}C_{y\ast x,z\ast x}A_{x\circ z,y\circ z},\\
C_{x,y}A_{y,z}B_{x\circ y,z\ast y}=B_{x,z}C_{y\ast x,z\ast x}B_{x\circ z,y\circ z}+C_{x,z}A_{y\ast x,z\ast x}B_{x\circ z,y\circ z},\\
C_{x,y}C_{y,z}B_{x\circ y,z\ast y}=B_{x,z}C_{y\ast x,z\ast x}C_{x\circ z,y\circ z},\\
A_{x,y}C_{y,z}C_{x\circ y,z\ast y}=C_{x,z}C_{y\ast x,z\ast x}A_{x\circ z,y\circ z},\\
C_{x,y}A_{y,z}C_{x\circ y,z\ast y}=C_{x,z}A_{y\ast x,z\ast x}C_{x\circ z,y\circ z},\\
B_{x,y}C_{y,z}A_{x\circ y,z\ast y}=C_{x,y}B_{y,z}A_{x\circ y,z\ast y}=B_{x,y}B_{y,z}C_{x\circ y,z\ast y}=C_{x,y}C_{y,z}C_{x\circ y,z\ast y}=0,\\
A_{x,z}B_{y\ast x,z\ast x}C_{x\circ z,y\circ z}=A_{x,z}C_{y\ast x,z\ast x}B_{x\circ z,y\circ z}\\
=C_{x,z}B_{y\ast x,z\ast x}B_{x\circ z,y\circ z}=C_{x,z}C_{y\ast x,z\ast x}C_{x\circ z,y\circ z}=0,\\
A_{x,y}A_{y,z}B_{x\circ y,z\ast y}=A_{x,z}B_{y\ast x,z\ast x}A_{x\circ z,y\circ z}+
A_{x,z}A_{y\ast x,z\ast x}B_{x\circ z,y\circ z}\\
+\delta A_{x,z}B_{y\ast x,z\ast x}B_{x\circ z,y\circ z}+B_{x,z}B_{y\ast x,z\ast x}B_{x\circ z,y\circ z},\\
B_{x,z}A_{y\ast x,z\ast x}A_{x\circ z,y\circ z}=B_{x,y}A_{y,z}A_{x\circ y,z\ast y}+A_{x,y}B_{y,z}A_{x\circ y,z\ast y}\\
+\delta B_{x,y}B_{y,z}A_{x\circ y,z\ast y}+B_{x,y}B_{y,z}B_{x\circ y,z\ast y}
   \end{gather*}
for $\forall\,x,\,y,\,z\in X$, see Fig.~\ref{bbr31},~\ref{bbr32}.

 \begin{figure}
  \centering\includegraphics[width=420pt]{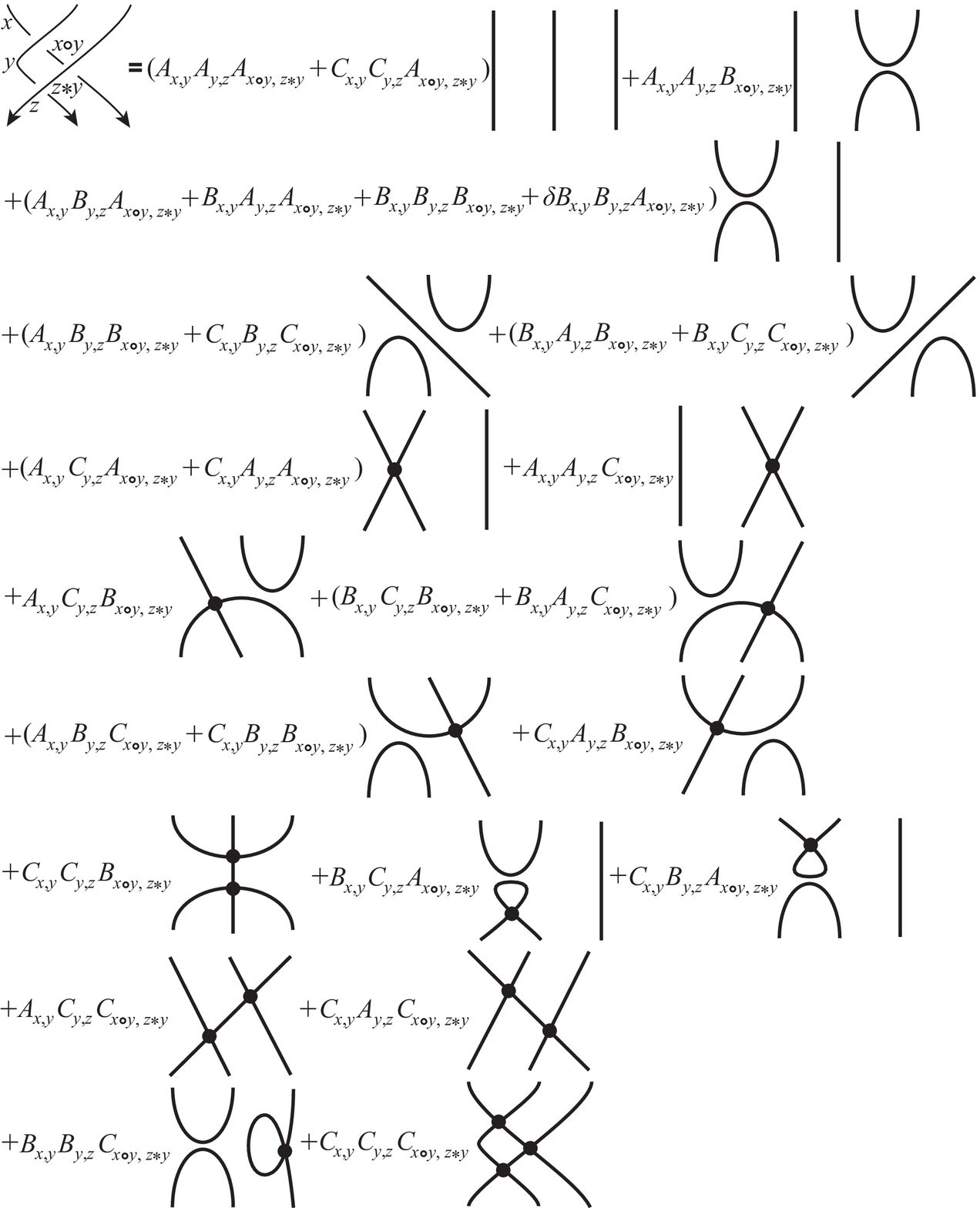}
  \caption{Relations from $\Omega{3}$}\label{bbr31}
 \end{figure}

 \begin{figure}
  \centering\includegraphics[width=440pt]{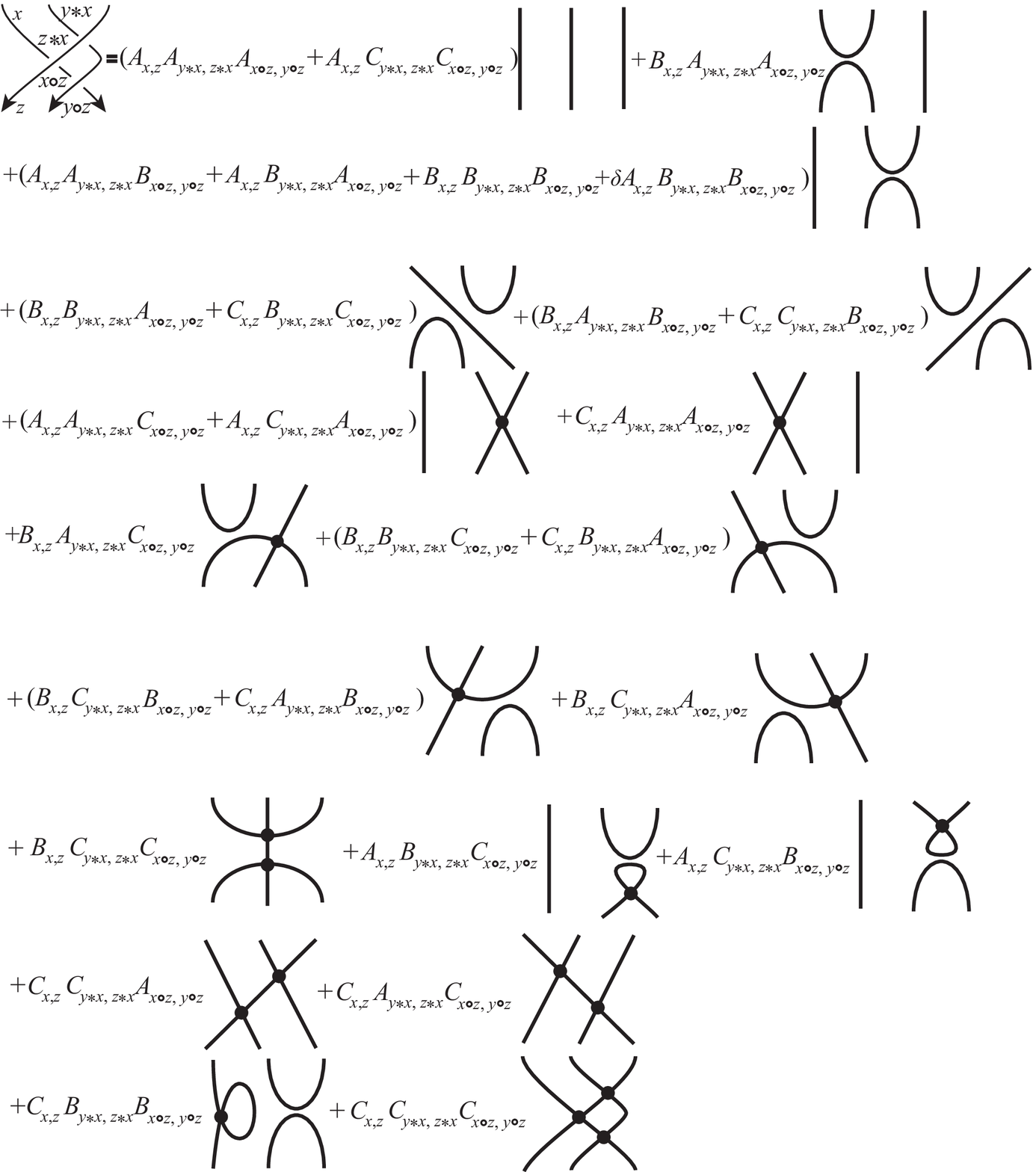}
  \caption{Relations from $\Omega{3}$}\label{bbr32}
 \end{figure}

As a result, we get the following definitions (cf.~\cite{NOR}).

Let $L$ be an oriented (virtual) link diagram with $n$ crossings and let
 $$
\mathcal{B}(L)=\langle x_1,\dots,x_{2n}\,|\,\mathrm{CR},\,\mathrm{R1},\,\mathrm{R2},\,\mathrm{R3},\,\mathrm{R4}\rangle.
 $$
be its fundamental biquandle. There are $3^n$ states of $L$, i.e.\ at each crossing we have either the positive smoothing, or negative smoothing, or the graphical vertex  (we disregard virtual crossings). For each state we have the product of $n$ factors of $A_{x,y}$, or $B_{x,y}$, or $C_{x,y}$, or $D_{x,y}$, or $E_{x,y}$, or $F_{x,y}$ times a $4$-valent graph with a cross structure.

 \begin{definition}
The {\em fundamental parity-biquandle bracket value} for $L$ is the sum of the contributions times $w^{-\mathrm{wr}(L)}$, where $\mathrm{wr}(L)$ is the writhe number of $L$.
 \end{definition}

 \begin{definition}\label{def:pbbr}
Let $X$ be a finite biquandle, $R$ be a commutative ring with unity. Let $f\in\mathrm{Hom}(\mathcal{B}(L),X)$ be an $X$-coloring and a sequence of maps $\beta=(A,B,C,D,E,F)$, $A,\,B,\,C,\,D,\,E,\,F\colon X\times X\to R$, with distinguished elements $\delta\in R$ and $w\in R^\prime$ satisfy the following relations:
 \begin{enumerate}
  \item[(i)]
$\delta A_{x,x}+B_{x,x}=w$, $\delta D_{x,x}+E_{x,x}=w^{-1}$ and $C_{x,x}=F_{x,x}=0$ for $\forall\,x\in X$,
  \item[(ii)]
$A_{x,y}F_{x,y}=C_{x,y}D_{x,y}=B_{x,y}F_{x,y}=C_{x,y}E_{x,y}=0$, $A_{x,y}D_{x,y}=B_{x,y}E_{x,y}=1-C_{x,y}F_{x,y}$ and
$\delta A_{x,y}D_{x,y}=-A_{x,y}E_{x,y}-B_{x,y}D_{x,y}$ for $\forall\,x,y\in X$,
  \item[(iii)]
 \begin{gather*}
A_{x,y}A_{y,z}A_{x\circ y,z\ast y}+C_{x,y}C_{y,z}A_{x\circ y,z\ast y}=A_{x,z}A_{y\ast x,z\ast x}A_{x\circ z,y\circ z}+A_{x,z}C_{y\ast x,z\ast x}C_{x\circ z,y\circ z},\\
A_{x,y}B_{y,z}B_{x\circ y,z\ast y}+C_{x,y}B_{y,z}C_{x\circ y,z\ast y}=B_{x,z}B_{y\ast x,z\ast x}A_{x\circ z,y\circ z}+C_{x,z}B_{y\ast x,z\ast x}C_{x\circ z,y\circ z},\\
B_{x,y}A_{y,z}B_{x\circ y,z\ast y}+B_{x,y}C_{y,z}C_{x\circ y,z\ast y}=B_{x,z}A_{y\ast x,z\ast x}B_{x\circ z,y\circ z}+C_{x,z}C_{y\ast x,z\ast x}B_{x\circ z,y\circ z},\\
A_{x,y}C_{y,z}A_{x\circ y,z\ast y}+C_{x,y}A_{y,z}A_{x\circ y,z\ast y}=C_{x,z}A_{y\ast x,z\ast x}A_{x\circ z,y\circ z},\\
A_{x,y}A_{y,z}C_{x\circ y,z\ast y}=A_{x,z}A_{y\ast x,z\ast x}C_{x\circ z,y\circ z}+A_{x,z}C_{y\ast x,z\ast x}A_{x\circ z,y\circ z},\\
A_{x,y}C_{y,z}B_{x\circ y,z\ast y}=B_{x,z}B_{y\ast x,z\ast x}C_{x\circ z,y\circ z}+C_{x,z}B_{y\ast x,z\ast x}A_{x\circ z,y\circ z},\\
B_{x,y}C_{y,z}B_{x\circ y,z\ast y}+B_{x,z}A_{y,z}C_{x\circ y,z\ast y}=B_{x,z}A_{y\ast x,z\ast x}C_{x\circ z,y\circ z},\\
A_{x,y}B_{y,z}C_{x\circ y,z\ast y}+C_{x,z}B_{y,z}B_{x\circ y,z\ast y}=B_{x,z}C_{y\ast x,z\ast x}A_{x\circ z,y\circ z},\\
C_{x,y}A_{y,z}B_{x\circ y,z\ast y}=B_{x,z}C_{y\ast x,z\ast x}B_{x\circ z,y\circ z}+C_{x,z}A_{y\ast x,z\ast x}B_{x\circ z,y\circ z},\\
C_{x,y}C_{y,z}B_{x\circ y,z\ast y}=B_{x,z}C_{y\ast x,z\ast x}C_{x\circ z,y\circ z},\\
A_{x,y}C_{y,z}C_{x\circ y,z\ast y}=C_{x,z}C_{y\ast x,z\ast x}A_{x\circ z,y\circ z},\\
C_{x,y}A_{y,z}C_{x\circ y,z\ast y}=C_{x,z}A_{y\ast x,z\ast x}C_{x\circ z,y\circ z},\\
B_{x,y}C_{y,z}A_{x\circ y,z\ast y}=C_{x,y}B_{y,z}A_{x\circ y,z\ast y}=B_{x,y}B_{y,z}C_{x\circ y,z\ast y}=C_{x,y}C_{y,z}C_{x\circ y,z\ast y}=0,\\
A_{x,z}B_{y\ast x,z\ast x}C_{x\circ z,y\circ z}=A_{x,z}C_{y\ast x,z\ast x}B_{x\circ z,y\circ z}\\
=C_{x,z}B_{y\ast x,z\ast x}B_{x\circ z,y\circ z}=C_{x,z}C_{y\ast x,z\ast x}C_{x\circ z,y\circ z}=0,\\
A_{x,y}A_{y,z}B_{x\circ y,z\ast y}=A_{x,z}B_{y\ast x,z\ast x}A_{x\circ z,y\circ z}+
A_{x,z}A_{y\ast x,z\ast x}B_{x\circ z,y\circ z}\\
+\delta A_{x,z}B_{y\ast x,z\ast x}B_{x\circ z,y\circ z}+B_{x,z}B_{y\ast x,z\ast x}B_{x\circ z,y\circ z},\\
B_{x,z}A_{y\ast x,z\ast x}A_{x\circ z,y\circ z}=B_{x,y}A_{y,z}A_{x\circ y,z\ast y}+A_{x,y}B_{y,z}A_{x\circ y,z\ast y}\\
+\delta B_{x,y}B_{y,z}A_{x\circ y,z\ast y}+B_{x,y}B_{y,z}B_{x\circ y,z\ast y}
   \end{gather*}
for $\forall\,x,y,z\in X$.
 \end{enumerate}
Here $A_{x,y}=A(x,y)$, $B_{x,y}=B(x,y)$, $C_{x,y}=C(x,y)$, $D_{x,y}=D(x,y)$, $E_{x,y}=E(x,y)$, $F_{x,y}=F(x,y)$.

We set the value of the fundamental parity-biquandle bracket value in $f$ for $\beta$ to be $\beta(f)_{gr}\in R\mathfrak{G}_\delta$.

The {\em parity-biquandle bracket multiset} of $L$ is the following multiset:
 $$
\Phi^{\beta,M}_{X,gr}(L)=\{\beta(f)_{gr}\,|f\in\mathrm{Hom}(\mathcal{B}(L),X)\}.
 $$
 \end{definition}

 \begin{theorem}
The parity-biquandle bracket multiset is an invariant of virtual links. Namely{\em,} if two virtual link diagrams $L_1$ and $L_2$ represent the same link then $\Phi^{\beta,M}_{X,gr}(L_1)$ is isomorphic to $\Phi^{\beta,M}_{X,gr}(L_2)$ for any biquandle $X$ and $\beta$ from Def.~{\em\ref{sec:parbiqbr}.\ref{def:pbbr}}.
 \end{theorem}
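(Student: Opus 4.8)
The plan is to reduce invariance of the multiset to a \emph{pointwise} statement about colourings, and then to a finite list of local skein identities which turn out to be exactly the relations imposed in Definition~\ref{def:pbbr}. First I would observe that a virtual link is a diagram taken modulo planar isotopy, the detour move, and the oriented Reidemeister moves $\Omega1,\Omega2,\Omega3$ of Fig.~\ref{rdmst}. The state sum ignores virtual crossings completely, and the fundamental biquandle $\mathcal B(L)$ (hence $\mathrm{Hom}(\mathcal B(L),X)$) is unchanged by planar isotopy and detour moves; so $\Phi^{\beta,M}_{X,gr}$ is visibly unaffected by the virtual moves, and only $\Omega1,\Omega2,\Omega3$ remain. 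For each of these three moves the colourings of the two diagrams are in a natural bijection --- this is the statement that $\#\mathrm{Hom}(\mathcal B(L),X)$ is an invariant recorded after Definition~\ref{def:biq}, the colour of the new arc created by $\Omega1$ being forced by the crossing relations together with axioms (R1),(R2), and the correspondence under $\Omega2,\Omega3$ being forced by (R2)--(R4). Hence it will suffice to show that $\beta(f)_{gr}=\beta(f')_{gr}$ in $R\mathfrak{G}_\delta$ whenever $f,f'$ are colourings matched by this bijection; the desired multiset isomorphism $\Phi^{\beta,M}_{X,gr}(L_1)\cong\Phi^{\beta,M}_{X,gr}(L_2)$ is then realised by the bijection itself.

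Next I would use locality. Once a colouring and the states of all crossings lying outside the region where the move is performed are fixed, the total state-sum contribution equals a fixed ``outside'' factor times the sum, over the $3^k$ choices of states at the $k$ crossings inside the region ($k=1,2,3$), of the product of the corresponding coefficients times the local tangle they produce; this local tangle is read in the relative version of $R\mathfrak{G}_\delta$ (a free circle counting as the scalar $\delta$, and a bigon formed by two graphical vertices being erased by the second Reidemeister move among $4$-valent graphs, Fig.~\ref{rdmst2fn}). Thus the theorem reduces to verifying, for each move and each local picture that can appear, a single identity in this relative skein module; these identities are precisely the relations (i)--(iii) of Definition~\ref{def:pbbr}, packaged from the relations read off Figs.~\ref{bbr1},~\ref{bbr21},~\ref{bbr22},~\ref{bbr31},~\ref{bbr32}.

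Then I would carry this out move by move. For $\Omega1$: resolve the kink crossing into its three states; the graphical-vertex state drops out because $C_{x,x}=F_{x,x}=0$ (a loop at a graphical vertex is not reducible in $\mathfrak{G}_\delta$), and the two smoothings contribute $\delta A_{x,x}+B_{x,x}=w$ for a positive kink (respectively $\delta D_{x,x}+E_{x,x}=w^{-1}$ for a negative one), which cancels exactly the change of the normalising factor $w^{-\mathrm{wr}(L)}$. For $\Omega2$: expand the two crossings into $3\times3$ local states, group the nine resulting tangles by how many graphical vertices they carry (zero, one, two --- the two-vertex bigon identified with the trivial tangle, and any tangle carrying a spurious circle absorbing a factor $\delta$), and apply the relations in (ii); the one-vertex tangles then receive coefficient $0$ and the trivial tangle receives coefficient $1$, with two sub-cases according to whether the strands are co- or counter-oriented (Figs.~\ref{bbr21} and~\ref{bbr22}). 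For $\Omega3$: proceed the same way with $3^3=27$ local states on each side; the six boundary edges of the tangle carry the same colours on the two sides, while the interior edges carry the colours $x\circ y,\ z\ast y,\ y\ast x,\ z\ast x,\ x\circ z,\ y\circ z$ dictated by the crossing relations, so that equating the coefficient of each local picture on the left with its coefficient on the right gives precisely the list (iii).

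The hard part will be this last step. One must determine, for each of the $27+27$ states, the underlying $4$-valent graph with cross structure that it produces together with its monomial in $A,B,C,D,E,F$, then collect states yielding the same graph (including the identifications built into $\mathfrak{G}_\delta$) and check that the resulting system of equations coincides with (iii) --- purely bookkeeping, but lengthy. To keep it manageable I would invoke the standard reduction of the oriented Reidemeister moves to a generating set (a single variant each of $\Omega2$ and $\Omega3$ together with the two variants of $\Omega1$), so that only one version of $\Omega3$ needs a full check, the others following by composing it with $\Omega1,\Omega2$ and detour moves. Once all these local identities are verified, pointwise invariance holds, and hence $\Phi^{\beta,M}_{X,gr}(L_1)\cong\Phi^{\beta,M}_{X,gr}(L_2)$.
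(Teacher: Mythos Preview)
Your proposal is correct and follows essentially the same strategy as the paper: reduce to a single Reidemeister move, use the biquandle axioms to get a bijection between $X$-colourings before and after the move, and then verify pointwise that $\beta(f)_{gr}=\beta(f')_{gr}$ using the relations (i)--(iii), which were derived precisely from Figs.~\ref{bbr1}--\ref{bbr32} for this purpose. The paper's proof is a two-sentence sketch of exactly this outline; your version simply fills in the locality argument and the move-by-move bookkeeping that the paper leaves implicit.
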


 \begin{proof}
Let $L_1$ is obtained from $L_2$ by a Reidemeister move. It follows from Def.~\ref{sec:kn&biq&par}.\ref{def:biq} that the fundamental biquandles $\mathcal{B}(L_1)$ and $\mathcal{B}(L_2)$ are isomorphic and to each biquandle coloring $f_1\in \mathrm{Hom}(\mathcal{B}(L_1),X)$ of $L_1$ it is associated the biquandle coloring $f_2\in \mathrm{Hom}(\mathcal{B}(L_2),X)$ of $L_2$. Then using relations from Def.~\ref{sec:parbiqbr}.\ref{def:pbbr} we get that $\beta(f_1)_{gr}=\beta(f_2)_{gr}$.
 \end{proof}

 \begin{example}
Let $\beta=(A,B,0,A^{-1},B^{-1},0),$ $A,\,B\in R^\prime,$ in Def.~\ref{sec:parbiqbr}.\ref{def:pbbr} then $\Phi^{\beta,M}_X(L)$ is isomorphic to $\Phi^{\beta,M}_{X,gr}(L)\Big/\bigcirc=\delta$, where $\bigcirc$ is a closed curve which may contain only virtual crossings.
 \end{example}

 \begin{example}
Let $K$ be a virtual knot, $X$ be the biquandle from Example~\ref{sec:kn&biq&par}.\ref{ex:parbiq}, $f\in\mathrm{Hom}(\mathcal{B}(K),X)$ and $\beta=(A,B,C,D,E,F)$, where
 \begin{align*}
A_{x,x+1}&=B_{x,x+1}=D_{x,x+1}=E_{x,x+1}=C_{x,x+1}+1=F_{x,x+1}+1\\
&=A_{x,x}+1=B_{x,x}+1=D_{x,x}+1=E_{x,x}+1=C_{x,x}=F_{x,x}=0,
 \end{align*}
$x\in\mathbb{Z}_2$, $\delta=0$, $w=1$, then $\Phi^{\beta,M}_{X,gr}(L)$ is isomorphic to the multiset containing the bracket $[K]$ twice and zero elements.
 \end{example}

 \section*{Acknowledgments}

The authors are grateful to I.\,M.~Nikonov and S.~Kim for his interest to the work and S.~Nelson for his talk in our seminar, which stimulates us to write this paper.

 \end{document}